\title{Abundance of fast growth of the number of periodic points\\
 in 2-dimensional area-preserving dynamics}
\author{Masayuki Asaoka}
\def\RR{\mathbb{R}}
\def\CC{\mathbb{C}}
\def\ZZ{\mathbb{Z}}
\def\TT{\mathbb{T}}
\def\cU{{\mathcal U}}
\def\cD{{\mathcal D}}
\def\cV{{\mathcal V}}
\def\cR{{\mathcal R}}
\def\ND{{\mathcal{ND}}}
\def\ra{{\rightarrow}}
\def\vphi{\varphi}
\def\del{\partial}
\DeclareMathOperator{\Diff}{Diff}
\DeclareMathOperator{\Ham}{Ham}
\DeclareMathOperator{\cH}{Hol}
\DeclareMathOperator{\Per}{Per}
\DeclareMathOperator{\Img}{Im}
\DeclareMathOperator{\Int}{Int}
\DeclareMathOperator{\supp}{supp}
\DeclareMathOperator{\tr}{tr}
\newcommand\cl[1]{{\overline{#1}}}
\theoremstyle{plain}
\newtheorem{thm}{Theorem}[section]
\newtheorem{prop}[thm]{Proposition}
\newtheorem{lemma}[thm]{Lemma}
\theoremstyle{definition}
\theoremstyle{remark}
\begin{document}
\maketitle

\begin{abstract}
We prove that there exists an open subset of
 the set of real-analytic Hamiltonian diffeomorphisms of a closed surface
 in which diffeomorphisms
 exhibiting fast growth of the number of periodic points are dense.
We also prove that there exists an open subset
 of the set of smooth area-preserving diffeomorphisms of a closed surface
 in which typical diffeomorphisms exhibit
 fast growth of the number of periodic points. 
\end{abstract}

\section{Introduction}
\label{sec:introduction}
In this article, we give an application of the KAM theory to
 a classical problem on the growth rate of the number of periodic points
 of a dynamical system.
We say that a continuous self-map $f$ on a topological space $X$ satisfies
 the condition (Exp) if the number of periodic points of $f$ grows
 at most exponentially, {\it i.e.},
 there exists $\lambda>1$ such that
\begin{equation*}
 \limsup_{n \ra \infty}
 \frac{1}{\lambda^n}\#\{x \in X \mid f^n(x)=x\}=0,
\end{equation*}
 where $\# S$ denotes the cardinality of a set $S$.
In 1965, Artin and Mazur \cite{AM} proved that $C^r$ maps satisfying
 (Exp) are dense in the space of $C^r$-maps on a closed manifold
 (see also \cite{Ka99}).
For one-dimensional dynamics,
 Theorem B' in \cite{MMS}
 by Martens, de Melo, and van Strien implies that
 a $C^r$ map on a compact interval with $r \geq 3$
 satisfies (Exp) if it does not admit any $(r-1)$-flat critical points
 (see also \cite[Corollary 1]{KK}).
In particular,
 the condition (Exp) is open and dense
 in the space of $C^r$ maps on a compact interval
 for any $3 \leq r \leq \infty$.
If we allow the existence of flat critical points,
 super-exponential growth is possible for interval maps.
For any given sequence of positive integers and any $1 \leq r<\infty$,
 Kaloshin and Kozlovski \cite{KK} constructed
 an example of $C^r$ unimodal map on a compact interval
 whose number of periodic points grows faster than the sequence.
\footnote{
 In the appendix of this paper, we give a simple example
 for $C^\infty$ case.}

For higher-dimensional dynamics,
 it turned out that super-exponential growth is abundant.
In \cite{Ka}, Kaloshin proved that super-exponential growth is generic
 in the Newhouse domain for $C^r$ surface diffeomorphisms
 with $2 \leq r \leq \infty$\footnote{
Kaloshin stated his result for finite $r$.
However, his proof works for the $C^\infty$ case.}.
Kaloshin and Saprykina \cite{KS}, Bonatti, D\'iaz, and Fisher \cite{BDF},
 and the author, Shinohara, and Turaev \cite{AST}
 also gave open sets of maps in which generic maps exhibit
 super-exponential growth in higher dimensions or in other settings.

It is natural to ask whether any real-analytic map
 satisfies the condition (Exp) or not.
Any real-analytic map on a compact interval satisfies (Exp)
 since it has no flat-critical point.
In higher dimension, as far as the author's knowledge,
 there was no known real-analytic example
 which exhibits super-exponential growth
 of the number of periodic points.\footnote{
A related example is given by Kozlovski \cite{Ko}.}
The first main result of this paper
 is the density of maps with super-exponential growth
 in an open subset of the set of real-analytic Hamiltonian diffeomorphisms
 on the two-dimensional torus.
Let $\TT^2$ be the two-dimensional torus $\RR^2/\ZZ^2$.
A diffeomorphism of $\TT^2$ is called a {\it Hamiltonian diffeomorphism}
 if it is a composition of finite number of the time-one maps
 of the Hamiltonian flows associated with
 real-analytic time-dependent Hamiltonian functions and 
 the standard volume form $dx \wedge dy$ on $\TT^2$.
Let $\Ham^\omega(\TT^2)$ be
 the space of real-analytic Hamiltonian diffeomorphisms
 of $\TT^2$ with the $C^\omega$-topology
 (see Section \ref{subsec:topology}
 for the definition of the $C^\omega$ topology).
For a diffeomorphism $f$ of a surface and $n \geq 1$,
 let $\Per_h(f,n)$ ({\it resp.,} $\Per_e(f,n)$) be the
 the set of hyperbolic ({\it resp.}, elliptic) periodic points
 of $f$ whose least period is $n$.
We call a periodic point {\it non-degenerate} if
 it is hyperbolic or elliptic.
\begin{thm}
\label{thm:main analytic}
There exists an open subset $\cU_*$ of $\Ham^\omega(\TT^2)$
 such that maps satisfying the following conditions
 are dense in $\cU_*$
  for any given sequence $(\gamma_n)_{n \geq 1}$ of positive integers:
\begin{enumerate}
\item All periodic point of $f$ are non-degenerate.
\item
$\displaystyle
\limsup_{n \ra \infty}
\frac{\min\{\#\Per_h(f,n),\#\Per_e(f,n)\}}{\gamma_n}=\infty.$
\end{enumerate}
\end{thm}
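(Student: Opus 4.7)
The plan is to choose $\cU_*$ as the open set of $f \in \Ham^\omega(\TT^2)$ admitting an \emph{elliptic periodic point of general type}, that is, an elliptic periodic point whose rotation number $\alpha$ avoids $\ZZ/q$ for $q \in \{1,2,3,4\}$ and whose first Birkhoff twist coefficient is nonzero. This condition is $C^\omega$-open because the rotation number and the twist coefficient depend continuously on the map near such a point, and it is nonempty since small real-analytic perturbations of an integrable twist diffeomorphism of $\TT^2$ satisfy it.

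Given $f \in \cU_*$, a sequence $(\gamma_n)$, and $\epsilon > 0$, I would construct the desired map as a limit $g = \lim_{k \to \infty} g_k$ built inductively, with $g_0 = f$ and $\|g_k - g_{k-1}\|_{C^\omega} < \epsilon \cdot 2^{-k}$. Each $g_k$ will carry a distinguished elliptic periodic point $p_k$ of general type, together with at least $\gamma_{n_k} + k$ elliptic and $\gamma_{n_k} + k$ hyperbolic non-degenerate periodic points of some period $n_k$, with $n_k \to \infty$. The creation mechanism is classical: in Birkhoff normal form around $p_{k-1}$, the appropriate iterate of $g_{k-1}$ is an analytic near-integrable twist map, so an arbitrarily small analytic Hamiltonian perturbation can make an invariant circle with rotation number of denominator $q = n_k$ exactly resonant. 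The Poincar\'e--Birkhoff theorem combined with local normal-form analysis then produces at least $2q$ non-degenerate periodic points of period $q$, alternately elliptic and hyperbolic, and choosing $q$ large enough exceeds the required count; one of the new elliptic orbits, again of general type, serves as $p_k$. Since all tracked periodic points are non-degenerate, they persist as such under the subsequent $C^\omega$-small perturbations, so the counts accumulate across steps.

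The principal obstacle is that analytic perturbations cannot be spatially localized by bump functions, a device used essentially in the $C^r$ Newhouse-domain constructions of \cite{Ka,KS,BDF,AST}. Here the perturbations must be realized as time-one maps of Hamiltonian flows generated by analytic functions on all of $\TT^2$, so one must approximate a smooth localized perturbation by an analytic one while keeping the global $C^\omega$ norm under control. The natural strategy is to use truncated Fourier approximations (or pull-backs of Gaussians to $\TT^2$) of a smooth bump near the lift of $p_{k-1}$, with the complex-strip width calibrated stage by stage so that residual effects away from $p_{k-1}$ are harmless compared to the non-degeneracy scales inherited from earlier stages. Once this analytic quasi-localization is set up, the Cauchy sequence converges in $C^\omega$ to some $g$ within $\epsilon$ of $f$; condition (2) follows from $n_k \to \infty$, and condition (1) can be ensured by a final arbitrarily small analytic perturbation that removes any residual degenerate periodic points outside the regions where the previous construction has already fixed the dynamics.
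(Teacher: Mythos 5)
Your overall architecture (an open set defined by a persistent twist structure, inductive resonant perturbations producing alternating elliptic/hyperbolic orbits, a $C^\omega$-Cauchy sequence with non-degeneracy persisting) matches the paper's strategy, which works with a graph KAM curve of Diophantine rotation number persisting by the Siegel--Moser theorem rather than directly with an elliptic point. However, there are two genuine gaps.

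First, the counting step fails. Near a single resonant invariant circle of rotation number $p/q$, the Poincar\'e--Birkhoff mechanism you invoke yields $2q$ periodic points, i.e.\ only \emph{two} orbits of period $q$ (one elliptic, one hyperbolic in the generic picture), hence $\#\Per_h(g_k,q)\geq q$ and $\#\Per_e(g_k,q)\geq q$. Since the sequence $(\gamma_n)$ is arbitrary (e.g.\ $\gamma_n=2^{n^2}$), the inequality $q\geq \gamma_q+k$ can never be arranged by ``choosing $q$ large enough''; the whole point of the theorem is to beat \emph{every} prescribed sequence, and the baseline output of Poincar\'e--Birkhoff grows only linearly in the period. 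You must instead engineer a perturbation that creates on the order of $\gamma_q$ \emph{distinct} orbits of period $q$ on the circle. The paper does this explicitly: after shearing the circle to rational rotation number $\theta_1=p/n_1$, it composes with the time-$t$ map of the Hamiltonian $H_2(x,y)=-h_2(x)$, where $h_2$ is a trigonometric polynomial with $2\gamma_{n_1} n_1$ prescribed critical points along the circle and second derivatives $(-1)^i$ at one point of each of $2\gamma_{n_1}$ chosen orbits; a trace computation using the twist condition (Lemma \ref{lemma:twist}) then shows $\gamma_{n_1}$ orbits become hyperbolic and $\gamma_{n_1}$ elliptic, giving $n_1\gamma_{n_1}$ points of each type. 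Incidentally, this perturbation is globally defined and analytic, so the ``analytic localization'' difficulty you flag as the principal obstacle does not actually arise for the creation step.

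Second, your plan to secure condition (1) by ``a final arbitrarily small analytic perturbation that removes any residual degenerate periodic points'' cannot work: non-degeneracy of \emph{all} periodic points is the intersection $\ND_\infty=\bigcap_n\ND_n$ of countably many open dense conditions, one for each period, and no single perturbation achieves all of them simultaneously while preserving the infinitely many orbits already created. Moreover, $\Ham^\omega(\TT^2)$ with this topology is not a Baire space, so one cannot fall back on a genericity argument either. The paper resolves this by interleaving: at each stage $f_k$ is placed in $\ND_{n_k}$ and $\epsilon_k$ is chosen so small that the entire $d_{K_\infty}$-ball of radius $\epsilon_k$ about $f_k$ lies in $\ND_{n_k}$ and retains the periodic-point counts; the limit then lies in $\ND_\infty$ by construction. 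A related point you gloss over is convergence itself: because the domains of holomorphy shrink at each stage, one must fix in advance a compact set $K_\infty$ and work inside $\Ham^\omega_{U_\infty}(\TT^2)$ for a fixed $U_\infty$ so that the uniform limit is again a real-analytic Hamiltonian diffeomorphism belonging to the prescribed neighborhood; your stage-by-stage strip calibration is the right instinct but needs to be organized this way to close the argument.
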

For $\gamma_n=2^{n^2}$,
 any map in the above dense set does not satisfy (Exp).
Remark that genericity is almost non-sense 
 in the space $\Ham^\omega(\TT^2)$ with our $C^\omega$ topology
 since it does not satisfies the Baire property.
In particualr, a residual subset of $\Ham^\omega(\TT^2)$
 may be empty in general.

We also apply the idea of the proof of Theorem \ref{thm:main analytic}
 to show that fast growth of the number of periodic points
 is $C^\infty$-typical in the sense of Kolmogorov-Arnold
 in an open subset of the set of $C^\infty$ area-preserving diffeomorphisms
 of a closed surface.
Let $M$ be a $C^r$ manifold with $1 \leq r \leq \infty$.
We denote the space of $C^r$ diffeomorphisms with the $C^r$-topology
 by $\Diff^r(M)$.
For a finite-dimensional closed unit disk $\Delta$
 and a subset $\cD$ of $\Diff^r(M)$,
 a family $(f_\mu)_{\mu \in \Delta}$
 is called {\it a $C^r$ family in $\cD$ with parameter set $\Delta$}
 if each $f_\mu$ is an element of $\cD$
 and the map $F(x,\mu)=f_\mu(x)$
 is a $C^r$ map from $M \times \Delta$ to $M$.
The identification of the family $(f_\mu)_{\mu \in \Delta}$
 with the $C^r$ map $F$ induces the $C^r$-topology to
 the set of $C^r$ family in $\cD$ with parameter set $\Delta$.
We say that a {\it $C^r$-typical} map in $\cD$ satisfies a given property
 if for any finite dimensional closed unit disk $\Delta$
 and any generic $C^r$ family $(f_\mu)_{\mu \in \Delta}$
 of maps in $\cD$,
 the set of parameters $\mu \in \Delta$
 for which the map $f_\mu$ satisfies the property
 contains a Lebesgue full measure subset of $\Delta$.
See \cite[Section 3.1]{IL} for more information on typicality
 and related concepts.
In many cases, it is harder to show that a dynamical property is typical
 than to show that it is generic.
For example, Newhouse \cite{Ne}
 proved that finiteness of periodic attractors
 is not generic for $C^r$ diffeomorphisms on a closed surface
 with $r \geq 2$ in 1970's.
But, $C^r$-typicality of finiteness of periodic attractors
 had been a long-standing open problems
 (see {\it e.g.}, \cite[Section 2.7]{Pa})
 until Berger \cite{Be} recently proved that
 this property is not $C^r$ typical
 for $d$-dimensional diffeomorphisms with $d \geq 3$
 and $ d \leq r \leq \infty$.
Typicality of the condition (Exp) has been also an open problem
 (see {\it e.g.}, \cite[Problems 1992-13, 1994-47]{Ar}).
The second main result of this paper
 shows that super-exponential growth is
 $C^\infty$-typical in an open subset
 of the set of $C^\infty$ {\it area-preserving}
 diffeomorphisms on a closed surface.
Let $\Sigma$ be a $C^\infty$ two-dimensional closed orientable manifold
 and $\Omega$ a $C^\infty$ volume form on $\Sigma$.
We denote the set of $C^\infty$ diffeomorphisms of $\Sigma$ which preserve
 the volume $\Omega$ by $\Diff^\infty_\Omega(\Sigma)$.
The set $\Diff^\infty_\Omega(\Sigma)$
 is endowed with the $C^\infty$-topology.
\begin{thm}
\label{thm:main smooth} 
There exists an open subset $\cU$
 of $\Diff^\infty_\Omega(\Sigma)$
 such that fast growth of the number of periodic points is $C^\infty$-typical
 in $\cU$.
More precisely,
 for any given sequence $(\gamma_n)_{n \geq 1}$ of positive integers,
 a $C^\infty$-typical map $f$ in $\cU$ satisfies
\begin{equation*}
 \limsup_{n \ra \infty}
 \frac{\min\{\#\Per_h(f,n),\#\Per_e(f,n)\}}{\gamma_n}=\infty.
\footnote{It seems possible
 to prove that $C^r$-typical
 diffeomorphisms in $\Diff^\infty_\Omega(\Sigma)$ has
 no degenerate periodic points
 by an argument analogous to the proof of $C^r$-typicality
 of Kupka-Smale diffeomorphisms in \cite{Ka97}.
}
\end{equation*}
\end{thm}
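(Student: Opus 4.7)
The plan is to upgrade the density statement of Theorem~\ref{thm:main analytic} to a $C^\infty$ typicality statement by combining a parametric version of the KAM nested-island construction with a Borel--Cantelli-type parameter exclusion. As preparation I fix a Darboux chart $\iota\colon D^2\hookrightarrow\Sigma$ with $\iota^*\Omega$ proportional to the standard area form, transplant (via $\iota$) a Hamiltonian disk piece of the open set $\cU_*\subset\Ham^\omega(\TT^2)$ of Theorem~\ref{thm:main analytic} into $\Diff^\infty_\Omega(\Sigma)$, and take $\cU$ to be a $C^\infty$-open neighborhood of this image. By construction, every $f\in\cU$ carries a persistent elliptic periodic point $p_f$ depending smoothly on $f$, whose Birkhoff normal form is twisted to a prescribed high order.

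Now let $(f_\mu)_{\mu\in\Delta}$ be a $C^\infty$ family in $\cU$. Running the proof of Theorem~\ref{thm:main analytic} with parameters, I would construct inductively, on a nesting depth $N$, a smooth family of level-$N$ elliptic periodic points $p^{(N)}_\mu$ of some period $q_N$, together with hyperbolic companions produced by applying the Poincar\'e--Birkhoff theorem inside a Birkhoff instability zone between two KAM curves surrounding the level-$(N-1)$ elliptic point. The construction at level $j$ succeeds provided the rotation number $\alpha_j(\mu)$ of the level-$(j-1)$ island lies in a Diophantine-type good set $\cA_j^\circ\subset\RR/\ZZ$ whose complement has small measure $\varepsilon_j$, with $\sum_j\varepsilon_j<\infty$. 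After restricting to generic families (which is a $C^\infty$-residual transversality condition on the family), I may assume each $\alpha_j\colon\Delta\to\RR/\ZZ$ is a smooth submersion, so that $\cA_j:=\alpha_j^{-1}(\cA_j^\circ)$ satisfies $|\Delta\setminus\cA_j|\le C\varepsilon_j$.

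For the typicality conclusion, for each $k\ge 1$ let $A_k\subset\Delta$ denote the parameter set where $f_\mu$ carries $>k\gamma_{q_N}$ non-degenerate periodic points of each type (hyperbolic and elliptic) of some period $q_N$. Because the inductive construction yields at least $C^N$ such pairs at level $N$, one has $\bigcap_{j\le N(k)}\cA_j\subset A_k$ for $N(k)$ sufficiently large. Summability of the $\varepsilon_j$ then gives $|\Delta\setminus A_k|=0$ for every $k$, and intersecting the $A_k$ over $k$ yields the required full-measure set on which $\limsup_n\min\{\#\Per_h(f_\mu,n),\#\Per_e(f_\mu,n)\}/\gamma_n=\infty$.

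The main obstacle is the parametric KAM step: the nested-island construction of Theorem~\ref{thm:main analytic} must be carried out in $C^\infty$ rather than $C^\omega$ while explicitly tracking the smooth dependence of the invariant curves and of the rotation numbers $\alpha_j(\mu)$ on the parameter, with quantitative estimates that remain uniform down the induction. One has to arrange that the Diophantine good sets $\cA_j^\circ$ can be chosen with measures summable in $j$ and that each $\alpha_j$ is transverse to the candidate good set, both of which should ultimately reduce to countably many codimension-one transversality conditions securing the restriction to a $C^\infty$-residual set of families. Once this parametric upgrade of the construction behind Theorem~\ref{thm:main analytic} is established, the Borel--Cantelli assembly above is routine and yields the theorem.
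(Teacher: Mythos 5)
Your Borel--Cantelli assembly has a genuine gap. You define $A_k$ so that $\bigcap_{j\le N(k)}\cA_j\subset A_k$ and then claim that summability of the $\varepsilon_j$ gives $|\Delta\setminus A_k|=0$. It does not: the excluded set satisfies only $|\Delta\setminus\bigcap_{j\le N(k)}\cA_j|\le C\sum_{j\le N(k)}\varepsilon_j$, which is small but positive. Borel--Cantelli would tell you that almost every $\mu$ lies in all but finitely many $\cA_j$, but your nested construction needs $\mu\in\cA_j$ for \emph{every} $j$ up to the relevant level, since the level-$j$ island is built inside the level-$(j-1)$ island; once the tower fails at some level at a parameter $\mu$, nothing in your scheme restarts it there. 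So as written you obtain a good set of measure $\ge|\Delta|-C\sum_j\varepsilon_j$, not a full-measure set. One could try to repair this by rerunning the construction with $\sum_j\varepsilon_j<\delta$ for a sequence $\delta\to0$ and intersecting the resulting residual sets of families, but that forces you to actually carry out the quantitative parametric $C^\infty$ KAM induction with uniform control of the maps $\alpha_j(\mu)$ down the tower --- precisely the step you acknowledge as the main obstacle and do not supply. The submersion property of $\alpha_j$ for generic families is likewise asserted rather than proved, and it is delicate because $\alpha_j$ depends on all previously constructed (parameter-dependent) objects.

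The paper's route shows that none of this parameter-exclusion machinery is needed, and it proves something stronger: for a \emph{generic family} the fast-growth property holds at \emph{every} $\mu\in\Delta$, not just almost every $\mu$. The key point is Moser's twist theorem applied with parameters (Proposition~\ref{prop:smooth Moser}): near a diffeomorphism with a Moser-stable elliptic fixed point, every $C^\infty$ family admits a smooth family of invariant KAM circles all carrying the \emph{same} rotation number $\theta$, independent of $\mu$. One then perturbs the whole family at once: a Hamiltonian flow translating along the circles (Lemma~\ref{lemma:smooth rational}) shifts $\theta$ to a rational $p/N$ with $N$ arbitrarily large simultaneously for all $\mu$, and a localized shear supported near the circles (Lemma~\ref{lemma:smooth perturb}) creates $N\gamma$ hyperbolic and $N\gamma$ elliptic $N$-periodic orbits for every $f_\mu$; the trace computation uses only $\tr(DF_\mu^N)_{(x,0)}=1$ and the iterated twist condition (Lemma~\ref{lemma:twist}). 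The sets $\cV_n$ of families for which every $f_\mu$ has at least $\gamma_n$ periodic points of each type of period $n$ are open by persistence, $\bigcup_{n\ge n_0}\cV_n$ is dense, and the residual intersection gives the theorem. Your idea of transplanting the analytic torus construction via a Darboux chart is also an unnecessary detour; the smooth statement is proved directly on $\Sigma$ and is the more elementary of the two results.
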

After the first version of this article was submitted to arXiv,
 Berger \cite{Be-pre} proved that the property (Exp) is not $C^r$-typical
 in $\Diff^r(M)$ for $1 \leq r <\infty$ and $\dim M \geq 3$.
Remark that Kaloshin and Hunt proved a result
 on the growth rate of the number of periodic points
 which contrasts with the above results on typical fast growth.
In \cite{KH1,KH2,KH3}, they proved that
 at most stretched exponential growth of the number of periodic points
 is {\it $C^r$-prevalent}
 in the sense of Hunt-Sauer-Yorke (see \cite{HSY,OY})
 for $1 \leq r \leq \infty$.

\paragraph{Acknowledgments}
The author is very grateful to Dmitry Turaev
 for pointing out the lack of the Baire property
 for the space of real-analytic maps
 and for his suggestion on how to overcome difficulties caused by it.
The author also thanks to anonymous referees for improvements of the article.
The author was partially supported by JSPS grant-in-aid
 for Scientific Research (C) 26400085.

\section{Smooth case}
Let $\Sigma$ be a $C^\infty$ two-dimensional orientable closed manifold
 and $\Omega$ a $C^\infty$ volume on $\Sigma$.
For an elliptic fixed point $p$ of
 a diffeomorphism $f$ in $\Diff_\Omega^\infty(\Sigma)$,
 the Birkhoff normal form of $f$ at $p$ up to the third order coincides with
\begin{equation*}
\begin{pmatrix} x \\ y \end{pmatrix}
 \mapsto
\begin{pmatrix}
 \cos\alpha & -\sin\alpha \\ \sin\alpha &  \quad\cos\alpha
\end{pmatrix}
\begin{pmatrix} x \\ y \end{pmatrix}
 +O((x^2+y^2)^2),
\end{equation*}
 where $\alpha(x,y)=\alpha_0+\alpha_1(x^2+y^2)$ with real numbers 
 $\alpha_0,\alpha_1$ (see {\it e.g.} \cite[Theorem 2.11]{Mo}).
We say that the elliptic fixed point $p$
 satisfies {\it the Moser stability condition} if $\alpha_1>0$.
The following theorem implies Theorem \ref{thm:main smooth}.
\begin{thm}
\label{thm:smooth periodic}
Suppose that a diffeomorphism $f_0 \in \Diff^\infty_\Omega(\Sigma)$
 admits an elliptic fixed point with the Moser stability condition.
Then, there exists an open neighborhood $\cU$ of $f_0$
 in $\Diff^\infty_\Omega(\Sigma)$
 such that for any finite dimensional closed unit disk $\Delta$
 and any sequence $(\gamma_n)_{n \geq 1}$ of positive integers,
 a generic family $(f_\mu)_{\mu \in \Delta}$ in $\cU$  satisfies that
\begin{equation}
\label{eqn:periodic}
\limsup_{n \ra \infty}
 \frac{1}{\gamma_n} \min\{\#\Per_h(f_\mu,n), \#\Per_e(f_\mu,n)\}= \infty
\end{equation}
 for any $\mu \in \Delta$.
In particular, the condition (\ref{eqn:periodic}) is $C^r$-typical in $\cU$.
\end{thm}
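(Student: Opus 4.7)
The plan is to combine three ingredients: persistence of a Moser-stable elliptic fixed point under $C^\infty$-small perturbations, a parametrized Kaloshin-style cascade construction that creates many Moser-stable elliptic periodic points of large period, and a Baire category argument on the space of $C^\infty$ families parametrized by $\Delta$.

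First I would choose $\cU$ as follows. Since the Moser stability condition is open and an elliptic fixed point depends continuously on the map, there is a $C^\infty$-neighborhood $\cU$ of $f_0$ on which every $f$ has a Moser-stable elliptic fixed point $p(f)$, varying smoothly in $f$. For each $k\geq 1$ let $\cV_k$ denote the set of $C^\infty$-families $(f_\mu)_{\mu\in\Delta}$ in $\cU$ such that for every $\mu\in\Delta$ there exists $n\geq k$ with
\begin{equation*}
\min\{\#\Per_h(f_\mu,n),\#\Per_e(f_\mu,n)\}\ge k\gamma_n.
\end{equation*}
Non-degeneracy of a periodic point is an open condition, so together with compactness of $\Delta$ (cover $\Delta$ by $\mu$-neighborhoods on each of which a single $n$ works) one sees that $\cV_k$ is open in the space of $C^\infty$-families in $\cU$. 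Granted density of each $\cV_k$, Baire then gives that $\bigcap_{k}\cV_k$ is residual, and any family in this intersection satisfies (\ref{eqn:periodic}) at every $\mu\in\Delta$.

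The heart of the proof is density of $\cV_k$. Given $(f_\mu)$ in $\cU$ and $\epsilon>0$ I would construct an $\epsilon$-close family $(\tilde f_\mu)\in\cV_k$ by a nested local construction around each fixed point $p(f_\mu)$. Put $f_\mu$ in Birkhoff normal form near $p(f_\mu)$; by the parametric Moser twist theorem, applied to the twist $\alpha_1(\mu)>0$, one obtains for each $\mu$ a family of KAM invariant circles accumulating at $p(f_\mu)$, together with resonance annuli between them containing rational rotation numbers $p/q$ with $q$ arbitrarily large. A small $C^\infty$-perturbation supported in such an annulus, designed to create a pendulum at the chosen resonance, produces by a version of Poincar\'e--Birkhoff at least $2q$ alternately elliptic and hyperbolic periodic points of period $q$; a further generic perturbation secures the Moser stability of the new elliptic ones. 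Iterating this construction $r$ times around each newly created Moser-stable elliptic periodic point produces, for each $\mu$, at least $M^r$ hyperbolic and $M^r$ elliptic periodic points of some chosen period $P_r$. Using the freedom to pick arbitrarily large $q$ at each step, $M$ and the sequence $P_r$ can be arranged so that $M^r\ge k\gamma_{P_r}$ for arbitrarily large $P_r$, which places the perturbed family in $\cV_k$.

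The main obstacle is executing this cascade uniformly over the compact parameter set $\Delta$: the invariants $\alpha_0(\mu),\alpha_1(\mu)$ and hence the positions of KAM circles, the widths of resonance annuli, and the resonant denominators $q$ all depend on $\mu$, so one cannot in general target a single period $n$ simultaneously for all $\mu$. Since the $\limsup$ condition only requires \emph{some} period $n(\mu)\ge k$ per $\mu$, I would cover $\Delta$ by finitely many small $\mu$-pieces on each of which a fixed choice of resonant denominators works for all parameters in the piece, and then patch the corresponding perturbations via a $C^\infty$ partition of unity in $\mu$, using the smooth $\mu$-dependence of the Birkhoff normal form and of the KAM circles to make the patches compatible. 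The bulk of the technical work would go into the parametric KAM/Poincar\'e--Birkhoff analysis and into verifying that the newly created elliptic periodic points inherit the Moser stability condition generically, so that the cascade can genuinely be iterated.
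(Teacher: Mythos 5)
Your overall architecture (openness of the sets $\cV_k$ via persistence of non-degenerate periodic points and compactness of $\Delta$, followed by a Baire argument) matches the paper's, and that part is sound. The genuine gap is in the density step, and it is quantitative: the cascade you describe cannot produce enough periodic points. At each resonance $p/q$ the Poincar\'e--Birkhoff mechanism you invoke yields only a bounded number of orbits (two, i.e.\ $2q$ points of period $q$), so after $r$ stages of the cascade the number of periodic points of the resulting period $P_r=q_1\cdots q_r$ is at most $C^r$ while $P_r$ itself grows at least like $2^r$; hence the count, as a function of the period $n$, is at most polynomial in $n$, and the inequality $M^r\ge k\gamma_{P_r}$ is unattainable for, say, $\gamma_n=2^{n^2}$. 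What is needed --- and what the paper does --- is a mechanism producing \emph{arbitrarily many} orbits of a \emph{single} period $N$ in one perturbation. Concretely: since $f_\mu$ on the KAM circle is smoothly conjugate to the rigid rotation by $\theta$, one first composes with a Hamiltonian flow that translates along the circle (Lemma \ref{lemma:smooth rational}) so that the rotation number becomes exactly rational with large denominator $N$; the circle then consists entirely of $N$-periodic points. A second Hamiltonian perturbation, a shear $(x,y)\mapsto(x,y+th'(x))$ with $h$ chosen to have $2\gamma$ suitably prescribed critical points, breaks this curve of periodic points into $2\gamma$ orbits, alternately hyperbolic and elliptic by the trace computation of Lemma \ref{lemma:smooth perturb}. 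Since $\gamma$ is a free parameter of the perturbation, one obtains $N\gamma_N$ points of each type in a single step; no iteration, and no verification that the new elliptic points inherit the Moser stability condition, is required.

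A second, lesser gap is your treatment of the parameter dependence. The partition-of-unity patching in $\mu$ is not justified: on overlaps the blended perturbation need not create, nor keep non-degenerate, the periodic orbits targeted by either piece. The paper removes this difficulty at the source: Proposition \ref{prop:smooth Moser} (a parametrized Moser twist theorem) provides, for every $C^\infty$ family in $\cU$, a single $C^\infty$ family of KAM circles $(\psi_\mu)_{\mu\in\Delta}$ with one common rotation number $\theta$ for all $\mu$. All subsequent perturbations are built from $\mu$-dependent Hamiltonians defined through $\psi_\mu$, so the same period $N$ and the same lower bound on the count hold simultaneously for every $\mu\in\Delta$, and the stronger, $\mu$-uniform sets $\cV_n$ of the paper suffice without any patching.
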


The rest of this section is devoted to
 the proof of Theorem \ref{thm:smooth periodic}.
Fix a finite dimensional closed unit disk $\Delta$.
Let  $(f_\mu)_{\mu \in \Delta}$ be a $C^\infty$ family
 in $\Diff^\infty_\Omega(\Sigma)$
 and $\theta$ a real number.
We call a family $(\psi_\mu)_{\mu \in \Delta}$
  {\it a family of KAM circles} with rotation number $\theta$
 if $(\psi_\mu)_{\mu \in \Delta}$ is a $C^\infty$ family of embeddings
 from $S^1 \times [-1,1]$ to $\Sigma$ such that
\begin{equation*}
 f_\mu \circ \psi_\mu(x,0)=\psi_\mu(x+\theta,0)
\end{equation*}
 and
\begin{equation*}
\left(\frac{\del}{\del y}
 \left[
  P_x \circ (\psi_\mu^{-1} \circ f_\mu \circ \psi_\mu)
  \right]\right) (x,0)>0
\end{equation*}
 for any $\mu \in \Delta$ and $x \in S^1$,
 where $P_x:S^1 \times [-1,1] \ra S^1$ is the projection
 to the first coordinate.
The former condition means that $f_\mu$ is $C^\infty$ conjugate
 to the rigid rotation of angle $\theta$ on
 the invariant circle $\psi_\mu(S^1 \times \{0\})$.
The latter means that $f_\mu$ `twists' a neighborhood of the circle.
The following is an easy consequence of a classical result by Moser
 (see {\it e.g.}, \cite[Theorem 2.11]{Mo}).
\begin{prop}
\label{prop:smooth Moser}
Let $f_0$ be a diffeomorphism in $\Diff_\Omega^\infty(\Sigma)$
 and $p$ be an elliptic fixed point of $f_0$
 with the Moser stability condition.
Then, there exists a neighborhood $\cU$ of $f_0$
 in $\Diff^\infty_\Omega(\Sigma)$ such that
 any $C^\infty$ family $(f_\mu)_{\mu \in \Delta}$ of maps in $\cU$
 admits a family of KAM circles.
\end{prop}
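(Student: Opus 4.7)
I would apply Moser's twist theorem together with its $C^\infty$ parametric version. Since $p$ is an elliptic fixed point of $f_0$ with non-resonant linear part (at least to order four, as encoded by the existence of the Birkhoff normal form displayed above), the implicit function theorem produces a $C^\infty$ family of fixed points $p_\mu$ for $f_\mu$ together with a $C^\infty$-varying family of Birkhoff coordinates near $p_\mu$. The Birkhoff twist coefficient $\alpha_1(f_\mu)$ depends continuously on $\mu$ and is positive at $\mu=0$ by hypothesis, so on a possibly smaller neighborhood $\cU$ of $f_0$ we have $\alpha_1(f_\mu)>0$ for every $f_\mu\in\cU$.

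In these Birkhoff coordinates $f_\mu$ is a small perturbation of the integrable twist map $(x,y)\mapsto R_{\alpha_0(\mu)+\alpha_1(\mu)(x^2+y^2)}(x,y)$. I would fix once and for all a single Diophantine rotation number $\theta$ close to $\alpha_0(0)$ and lying in the interior of the twist range for $f_0$, and invoke a parametric form of Moser's twist theorem: for each $f$ in a $C^\infty$-open neighborhood of $f_0$ there exists an embedding $\phi_f:S^1\to\Sigma$, depending $C^\infty$ on $f$, such that $f\circ\phi_f(x)=\phi_f(x+\theta)$. Plugging in the family yields a $C^\infty$ map $\mu\mapsto\phi_{f_\mu}$ giving embedded invariant circles $C_\mu=\phi_{f_\mu}(S^1)$ on which $f_\mu$ acts by rigid rotation of angle $\theta$.

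To build the annular extension, I would thicken each $C_\mu$ to a tubular neighborhood, for instance by setting $\psi_\mu(x,y)=\exp_{\phi_{f_\mu}(x)}(\epsilon y N_\mu(x))$ for a smoothly $\mu$-varying unit normal field $N_\mu$ along $C_\mu$ and a small $\epsilon>0$. The first defining identity $f_\mu\circ\psi_\mu(\,\cdot\,,0)=\psi_\mu(\,\cdot\,+\theta,0)$ then holds by construction. The twist condition $\left(\frac{\del}{\del y}[P_x\circ\psi_\mu^{-1}\circ f_\mu\circ\psi_\mu]\right)(x,0)>0$ is, when read in the Birkhoff chart, a positive multiple of $\alpha_1(f_\mu)$, hence positive throughout $\cU$.

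The principal technical point is the $C^\infty$ dependence of the KAM conjugacy $\phi_f$ on $f$. Producing a single invariant circle for $f_0$ is classical, but the parametric smoothness is what drives the whole argument. This should follow from the quadratically convergent Newton scheme underlying KAM: for a fixed Diophantine $\theta$ the linearized cohomological equation has a tame right inverse whose bounds depend only on the Diophantine exponent of $\theta$, so the iteration converges uniformly on a $C^\infty$-neighborhood of $f_0$ and its fixed point inherits smooth dependence on $f$ by an inverse function theorem in tame Fr\'echet scales (e.g.\ Hamilton's Nash–Moser framework, or P\"oschel's KAM-with-parameters). Once this parametric KAM statement is in hand, the remaining verifications are open-condition arguments and routine tubular neighborhood constructions.
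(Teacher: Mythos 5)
Your proposal is correct and follows the same route the paper intends: the paper presents Proposition \ref{prop:smooth Moser} as an easy consequence of Moser's twist theorem (citing \cite[Theorem 2.11]{Mo}) and provides no separate proof. Your sketch, namely Birkhoff coordinates near the elliptic point, a fixed Diophantine rotation number $\theta$ in the twist range, a parametric Moser twist theorem to obtain the invariant circle $\phi_{f_\mu}$, and a tubular-neighborhood extension together with the open twist condition, supplies the details the paper omits, and you correctly identify the $C^\infty$ dependence of the KAM conjugacy on the map as the technical heart of the argument.
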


The following proposition implies Theorem \ref{thm:smooth periodic},
 combining with Proposition \ref{prop:smooth Moser}.
\begin{prop}
\label{prop:smooth periodic}
Let $(f_\mu)_{\mu \in \Delta}$ be
 a $C^\infty$ family of maps in $\Diff_\Omega^r(\Sigma)$
 which admits a family of KAM circles.
For any neighborhood $\cV$ of $(f_\mu)_{\mu \in \Delta}$
 in the space of $C^\infty$ families of maps in $\Diff_\Omega^r(\Sigma)$,
 any sequence $(\gamma_n)_{n \geq 1}$ of positive integers,
 and any $n_0 \geq 1$,
 there exists a family $(\hat{f}_\mu)_{\mu \in \Delta}$ in $\cV$
 and an integer $\hat{n} \geq n_0$ such that
\begin{equation*}
\min\{\#\Per_h(\hat{f}_\mu, \hat{n}),
  \#\Per_e(\hat{f}_\mu,\hat{n})\}\geq \hat{n} \cdot \gamma_{\hat{n}}
\end{equation*}
 for any $\mu \in \Delta$.
\end{prop}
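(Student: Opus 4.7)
The plan is to iteratively produce nested resonance chains via the Poincar\'e--Birkhoff mechanism inside the KAM structure provided by $(\psi_\mu)$ and its successive iterates, compounding the number of orbits across multiple scales. First, I fix integers $k$ and $q_1, \ldots, q_k$, to be chosen later, and set $\hat n := q_1 q_2 \cdots q_k$. At the first level, I take a rational $p_1/q_1$ close to $\theta$ and construct a small $C^\infty$-smooth family of area-preserving perturbations supported in a neighborhood of $\psi_\mu(S^1 \times [-\epsilon_1, \epsilon_1])$, which breaks the central KAM circle into a resonance chain of period $q_1$. By a parametric version of the Poincar\'e--Birkhoff theorem, for every $\mu \in \Delta$ this yields at least one elliptic and one hyperbolic $f_\mu$-periodic orbit of period $q_1$; a generic choice of the perturbation makes each elliptic orbit satisfy the Moser stability condition uniformly in $\mu$. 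Activating several rationals $p/q_1$ in the twist range simultaneously, I obtain $N_1$ elliptic (and $N_1$ hyperbolic) orbits at this level.

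At each elliptic periodic orbit created at level $i-1$, viewed as a fixed point of $f_\mu^{q_1 \cdots q_{i-1}}$ satisfying the Moser stability condition, I apply Proposition \ref{prop:smooth Moser} to obtain a new $C^\infty$ family of KAM circles for this iterate. I repeat the level-$1$ construction at this scale with period $q_i$, producing $N_i$ new elliptic and hyperbolic orbits of $f_\mu^{q_1 \cdots q_{i-1}}$ of period $q_i$, equivalently $f_\mu$-orbits of period $q_1 \cdots q_i$. The level-$i$ perturbations are supported in strictly nested, much smaller neighborhoods of the level-$(i-1)$ elliptic orbits, so that they do not destroy the structure built at lower levels and the cumulative perturbation stays inside $\cV$.

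After $k$ levels, I have at least $\prod_{i=1}^k N_i$ elliptic and $\prod_{i=1}^k N_i$ hyperbolic $f_\mu$-orbits of period $\hat n$, uniformly in $\mu$. Since each such orbit consists of $\hat n$ distinct points,
\[
\min\bigl\{\#\Per_e(\hat f_\mu, \hat n),\ \#\Per_h(\hat f_\mu, \hat n)\bigr\}
\;\geq\; \hat n \cdot \prod_{i=1}^k N_i
\]
for every $\mu \in \Delta$. I conclude by choosing $k$, $q_1, \ldots, q_k$, and the $N_i$'s large enough, with $\hat n \geq n_0$, that $\prod_{i=1}^k N_i \geq \gamma_{\hat n}$.

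The main obstacle is the compound counting step: at each level the multiplicity $N_i$ is bounded essentially by the size of the local twist range times $q_i$, so making $\prod N_i$ dominate an arbitrary sequence $\gamma_{\hat n}$ requires a delicate simultaneous tuning of the depth $k$ and of the periods $q_i$, while keeping the accumulated perturbation within $\cV$. A subsidiary challenge is the uniformity in $\mu$: every perturbation must be a $C^\infty$-smooth family over $\Delta$, which is handled by relying throughout on the parametric KAM statement of Proposition \ref{prop:smooth Moser}.
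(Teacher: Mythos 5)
Your approach has a genuine gap, and it is precisely the one you flag at the end as the ``main obstacle'': the nested resonance construction cannot, in fact, dominate an arbitrary sequence $\gamma_n$. At level $i$ the number of distinct resonances $p/q_i$ available is bounded by (the width of the local twist range) $\times\,q_i$, and that twist range shrinks drastically as you pass to deeper islands. So the total multiplicity $\prod_{i=1}^k N_i$ grows at most like $C\cdot\hat n$ for some constant $C$ independent of the depth $k$, which is nowhere near $\gamma_{\hat n}$ when $\gamma_n$ is, say, $2^{n^2}$. Compounding over levels multiplies the period $\hat n$ exactly as fast as it multiplies the orbit count, so the ratio you need, $\prod N_i / \gamma_{\hat n}$, cannot be made $\geq 1$. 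A secondary but real difficulty is that the elliptic orbits produced by Poincar\'e--Birkhoff need not satisfy the Moser stability condition uniformly in $\mu$ over $\Delta$; asserting that a ``generic choice of perturbation'' fixes this simultaneously for all $\mu$ and all orbits at each level is exactly the kind of parametric transversality that would need a careful argument, and it can fail at isolated $\mu$.

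What the paper does instead is a one-level construction that sidesteps the counting problem entirely. Using the Hamiltonian flow of Lemma~\ref{lemma:smooth rational}, it first shifts the rotation number of the KAM circle to a rational $p/N$ with $N\geq n_0$, so the invariant circle consists entirely of period-$N$ points. It then perturbs by an arbitrarily small Hamiltonian shear $G_\mu^t$ whose generating function $h$ has $2\gamma N$ prescribed critical points on the circle with alternating signs of $h''$. The crucial point is that the number of oscillations of $h$, hence the number $2\gamma$ of resulting period-$N$ orbits, is \emph{independent} of the perturbation size $t$: for any $\gamma$ one simply takes $t>0$ small enough to stay in $\cV$. The twist condition (propagated by Lemma~\ref{lemma:twist}) then makes the trace of $D(F_\mu\circ G_\mu^t)^N$ equal to $2 + (-1)^i t\,\alpha$ with $\alpha>0$, so the $2\gamma$ orbits alternate hyperbolic/elliptic, giving $\gamma N$ of each. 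This is where your scheme is missing the key idea: one does not need to compound across levels or across many rationals, because a single rational resonance already supports arbitrarily many nondegenerate orbits of that period after an arbitrarily small perturbation.
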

\begin{proof}[Proof of Theorem \ref{thm:smooth periodic}
 from Proposition \ref{prop:smooth periodic}]
Let $\cU$ be an open neighborhood of $f_0$ in $\Diff^\infty_\Omega(\Sigma)$ 
 given by Proposition \ref{prop:smooth Moser}.
Fix a sequence $(\gamma_n)_{n \geq 1}$ of positive integers.
Let $\cV$ be the set of $C^\infty$ families
 $(f_\mu)_{\mu \in \Delta}$ of maps in $\cU$ .
Put
\begin{equation*}
 \cV_n=\{(f_\mu)_{\mu \in \Delta} \in \cV \mid
 \min\{\#\Per_h(f_\mu,n), \#\Per_e(f_\mu,n)\}\geq \gamma_n
 \text{ for any }\mu \in \Delta\}.
\end{equation*}
By the persistence of hyperbolic or elliptic fixed points,
 the set $\cV_n$ is open for any $n \geq 1$.
By Proposition \ref{prop:smooth periodic},
 the set $\bigcup_{n \geq n_0} \cV_n$ is dense in $\cV$
 for any $n_0 \geq 1$.
The set $\cR=\bigcap_{n_0 \geq 0}\bigcup_{n \geq n_0}\cV_n$ is
 a residual subset of $\cV$ and
 any family $(f_\mu)_{\mu \in \Delta}$ in $\cR$
 satisfies (\ref{eqn:periodic}).
\end{proof}

In order to prove Proposition \ref{prop:smooth periodic},
 we produce arbitrary many periodic points
 by a series of perturbations on KAM circles.

\begin{lemma}
\label{lemma:smooth rational}
Let  $(f_\mu)_{\mu \in \Delta}$ of maps in $\Diff^\infty_\Omega(\Sigma)$
 and $(\psi_\mu)_{\mu \in \Delta}$ a family of KAM circles.
There exists a family $(\Phi_\mu)_{\mu \in \Delta}$
 of $\Omega$-preserving flows on $\Sigma$ such that
 the map $(p,t,\mu) \mapsto \Phi_\mu^t(p)$ is of class $C^\infty$
 and $\Phi_\mu^t(\psi_\mu(x,0))=\psi_\mu(x+t,0)$
 for any $\mu \in \Delta$, $x \in S^1$, and $t \in \RR$.
\end{lemma}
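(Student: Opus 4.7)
The plan is to realize the flow $\Phi_\mu$ as the Hamiltonian flow of a smooth family of compactly supported Hamiltonians concentrated in a tubular neighborhood of the KAM circle $C_\mu = \psi_\mu(S^1 \times \{0\})$. Since $\Sigma$ is two-dimensional, this automatically produces an $\Omega$-preserving flow, and the only real task is to arrange that the vector field restricts to the tangent field $d\psi_\mu(\del/\del x)$ along $C_\mu$.

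First I would work in the coordinates provided by $\psi_\mu$ and pull back the area form to $\omega_\mu = \psi_\mu^*\Omega = a_\mu(x,y)\, dx \wedge dy$ on $S^1 \times [-1,1]$, with $a_\mu>0$ depending smoothly on $(x,y,\mu)$. I then set
\begin{equation*}
 h_\mu(x,y) = \int_0^y a_\mu(x,s)\, ds,
\end{equation*}
so that $h_\mu(x,0) \equiv 0$, $\del_y h_\mu(x,0) = a_\mu(x,0)$ and $\del_x h_\mu(x,0) = 0$. Writing out $i_X \omega_\mu = dh_\mu$ gives $X = (a_\mu^{-1}\del_y h_\mu)\,\del_x - (a_\mu^{-1}\del_x h_\mu)\,\del_y$, which specialises at $y = 0$ to exactly $\del/\del x$, as required.

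Next I would pick a $\mu$-independent bump $\chi \colon [-1,1] \to [0,1]$ with $\chi \equiv 1$ on a neighborhood of $0$ and $\chi \equiv 0$ on neighborhoods of $\pm 1$, and put
\begin{equation*}
 H_\mu(p) = \begin{cases} \chi(y)\, h_\mu(x,y) & \text{if } p = \psi_\mu(x,y) \in \psi_\mu(S^1 \times [-1,1]), \\ 0 & \text{otherwise.} \end{cases}
\end{equation*}
Because $\chi$ is supported away from $y = \pm 1$, extension by zero across the boundary of the tubular annulus gives a compactly supported function on $\Sigma$ that is $C^\infty$ jointly in $(p,\mu)$. Letting $X_\mu$ be the Hamiltonian vector field defined by $i_{X_\mu}\Omega = dH_\mu$, compact support makes $X_\mu$ complete and its flow $\Phi_\mu^t$ is $\Omega$-preserving and $C^\infty$ in $(p,t,\mu)$. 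Along $C_\mu$ one has $\chi \equiv 1$ and $h_\mu = 0$, so the $d\chi$-term drops out and $X_\mu\big|_{C_\mu}$ is the push-forward under $d\psi_\mu$ of the local vector field computed above; hence $\Phi_\mu^t(\psi_\mu(x,0)) = \psi_\mu(x+t,0)$.

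The main thing to watch is smoothness of $H_\mu$ in $(p,\mu)$ across $\del\psi_\mu(S^1\times[-1,1])$; the cutoff $\chi$ handles this, and the twist condition in the definition of a family of KAM circles is not used here (it enters only in the subsequent perturbation arguments of Proposition \ref{prop:smooth periodic}). Everything else is a routine verification of smooth parameter dependence.
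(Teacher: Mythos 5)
Your proposal is correct and is essentially the paper's own argument: the paper likewise takes the Hamiltonian $H(\psi_\mu(x,y),\mu)=y\cdot\beta_\mu(x,y)$ (where $\beta_\mu\,dx\wedge dy=\psi_\mu^*\Omega$), whose Hamiltonian vector field for the form $\beta_\mu\,dx\wedge dy$ restricts to $\del/\del x$ on $y=0$, and pushes its flow forward by $\psi_\mu$. Your choice of primitive $\int_0^y a_\mu(x,s)\,ds$ and the explicit cutoff $\chi$ are only cosmetic refinements (the cutoff makes precise the smooth extension to $\Sigma\times\Delta$ that the paper leaves implicit).
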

\begin{proof}
Let $\beta_\mu:S^1 \times [-1,1] \ra \RR$ be the function given by
 $\beta_\mu(x,y) dx\wedge dy=(\psi_\mu)^*\Omega$.
Take a $C^\infty$ function $H$ on $\Sigma \times \Delta$
 such that $H(\psi_\mu(x,y),\mu)=y \cdot \beta_\mu(x,y)$
 on $S^1 \times [-1,1] \times \Delta$
 and put $H_\mu(p)=H(p,\mu)$
 for $p \in \Sigma$ and $\mu \in\Delta$.
Let $\Phi_\mu$ be the Hamiltonian flow
 associated with the Hamiltonian function $H_\mu$ 
 and the symplectic form $\Omega$.
Then, the map $(p,t,\mu) \mapsto \Phi_\mu^t(p)$ is of class $C^\infty$.
The vector field $D\psi_\mu^{-1}(X_\mu)$ on $S^1 \times [-1,1]$
 is the Hamiltonian vector field associated
 with the Hamiltonian function $y \cdot \beta_\mu(x,y)$
 and the symplectic form $\beta_\mu dx \wedge dy$.
This vector field satisfies that
\begin{equation*}
 D\psi_\mu^{-1}(X_\mu)(x,0)= \frac{\del}{\del x}.
\end{equation*}
Hence, we have $\Phi_\mu^t(\psi_\mu(x,0))=\psi_\mu(x+t,0)$.
\end{proof}

\begin{lemma}
\label{lemma:twist}
Let $f:S^1 \times [-1,1] \ra S^1 \times \RR$ be a $C^1$ embedding
 homotopic to the inclusion map 
 and $g:S^1 \ra [-1,1]$ be a $C^1$ map
 such that $\Gamma_g=\{(x,g(x)) \mid x \in S^1\}$ is $f$-invariant
 and $\frac{\del}{\del y}[P_x \circ f]>0$ on $\Gamma_g$,
 where $P_x:S^1 \times [-1,1] \ra S^1$ is the projection
 to the first coordinate.
Then, 
 $\frac{\del}{\del y}[P_x \circ f^n]>0$ on $\Gamma_g$
 for any $n \geq 1$.
\end{lemma}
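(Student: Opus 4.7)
The plan is to straighten the invariant curve $\Gamma_g$ by a shear, so that in the new coordinates the derivative becomes upper triangular along the invariant circle, and then to analyze $D\tilde f^n$ directly as a product of such upper triangular matrices. Concretely, I set $\Psi(x,y)=(x,y-g(x))$ and $\tilde f=\Psi\circ f\circ \Psi^{-1}$, so that $\tilde f$ leaves $S^1\times\{0\}$ invariant. Since the second component of $\tilde f(x,0)$ vanishes identically, its $x$-derivative is zero, and $D\tilde f(x,0)$ has the upper triangular form
\begin{equation*}
 D\tilde f(x,0)=\begin{pmatrix} h'(x) & \tilde b(x) \\ 0 & \tilde\gamma(x)\end{pmatrix},
\end{equation*}
where $h(x)=P_x\circ\tilde f(x,0)$ is the induced circle map and $\tilde b(x)=\frac{\del}{\del y}[P_x\circ f](x,g(x))>0$ by the twist hypothesis. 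Because $\Psi$ is a vertical shear, $\frac{\del}{\del y}[P_x\circ f^n](x,g(x))$ coincides with the $(1,2)$-entry of $D\tilde f^n(x,0)$, so it is this entry I need to show is positive.

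Next I verify that both diagonal entries of $D\tilde f(x,0)$ are positive. The restriction $f|_{\Gamma_g}$ is a self-homeomorphism of $\Gamma_g$, and it is orientation-preserving because $f$ is homotopic to the inclusion; hence $h$ is an orientation-preserving $C^1$ homeomorphism of $S^1$ with $h'\geq 0$, and the embedding property of $f$ makes $D\tilde f$ invertible, forcing $h'>0$. For the lower-right entry I use $\det D\tilde f(x,0)=h'(x)\tilde\gamma(x)$ together with $\det Df>0$, which in the area-preserving setting of the intended application is automatic ($\det Df\equiv 1$); this yields $\tilde\gamma(x)>0$.

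Writing $x_k=h^k(x)$ and $\tilde M_k=D\tilde f(x_k,0)$, the chain rule gives $D\tilde f^n(x,0)=\tilde M_{n-1}\cdots \tilde M_0$, a product of upper triangular matrices each with positive diagonal and positive upper-right entry. A direct induction on $n$ shows the $(1,2)$-entry of this product equals
\begin{equation*}
 \sum_{k=0}^{n-1}\Bigl(\prod_{j=k+1}^{n-1}h'(x_j)\Bigr)\tilde b(x_k)\Bigl(\prod_{j=0}^{k-1}\tilde\gamma(x_j)\Bigr),
\end{equation*}
which is a sum of strictly positive terms, completing the argument. The step that requires the most care is the sign control on $\tilde\gamma$: without $\det Df>0$ the cross terms in the upper triangular product could have arbitrary signs, so the orientation information must be extracted cleanly from the embedding and homotopy hypotheses rather than postulated.
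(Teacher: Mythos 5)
Your argument is correct and is, at bottom, the same as the paper's: the shear $\Psi(x,y)=(x,y-g(x))$ carries the paper's cone $C(x,g(x))$ to the open first quadrant, and your explicit formula for the $(1,2)$-entry of a product of upper-triangular matrices with positive entries is precisely the quantitative content of the inductive cone-invariance claim $Df^n_p(\del/\del y)\in C(f^n(p))$. The one substantive point where you go beyond the paper is the sign of the lower-right entry: you are right that $\tilde\gamma>0$ forces you to invoke $\det Df>0$, and this genuinely does not follow from ``$C^1$ embedding homotopic to the inclusion'' --- for instance $f(x,y)=(x+y,-y)$ satisfies every stated hypothesis with $g\equiv 0$ and the twist condition, yet $f^2=\Id$, so $\frac{\del}{\del y}[P_x\circ f^2]=0$ and the lemma as literally stated fails. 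This is a defect of the statement rather than of your proof: the paper's own assertion that $Df_p(\del/\del y)\in C(f(p))$ already requires the image of $\del/\del y$ to lie above the tangent direction of $\Gamma_g$ at $f(p)$, which in your coordinates is exactly $\tilde\gamma>0$, so the paper silently uses the same orientation hypothesis. In the only place the lemma is applied ($f=F_\mu$, which preserves a volume form, so $\det Df\equiv 1$), both arguments go through; your version has the merit of making the needed hypothesis visible.
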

\begin{proof}
Define a (half) cone $C(x,g(x))$ in $T_{(x,g(x))}(S^1 \times \RR)$
 for $(x,g(x)) \in \Gamma_g$ by
\begin{equation*}
C(x,g(x))
 =\left\{ \left.s\frac{\del}{\del x}+t \frac{\del}{\del y}
 \; \right|\;  s>0,t>g'(x)s\right\}.
\end{equation*}
Since the restriction of $f$ to $\Gamma_g$ is
 an orientation preserving diffeomorphism
 and $(\del/\del y)[P_x \circ f]>0$ on $\Gamma_g$,
 we have $Df_p(\del/\del y) \in C(f(p))$
 and $Df_p(C(p)) \subset C(f(p))$
 for any $p \in \Gamma_g$.
By induction,
 $Df^n_p(\del/\del y) \in C(f^n(p))$ for any $p \in \Gamma_g$ and $n \geq 1$.
This means that $(\del/\del y)[P_x \circ f^n]>0$ on $\Gamma_g$,
\end{proof}

The following lemma is probably well-known for experts.
However, we give a proof because we use an analogous argument
 for the real-analytic case.
\begin{lemma}
\label{lemma:smooth perturb} 
Let  $(f_\mu)_{\mu \in \Delta}$ be a $C^\infty$ family
 of maps in $\Diff_\Omega^r(\Sigma)$,
 $(\psi_\mu)_{\mu \in \Delta}$ a family of KAM circles
 with a rational rotation number $\theta$,
 and $N$ the denominator of $\theta$.
For any neighborhood $\cV$ of the family $(f_\mu)_{\mu \in \Delta}$
 and $\gamma \geq 1$,
 there exists a family $(\hat{f}_\mu)_{\mu \in \Delta}$ in $\cV$
 such that
\begin{equation*}
\min\{\#\Per_h(\hat{f}_\mu,N),
  \#\Per_e(\hat{f}_\mu,N)\}\geq N \gamma
\end{equation*}
 for any $\mu \in \Delta$.
\end{lemma}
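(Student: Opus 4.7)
The plan is to produce the required periodic orbits by composing $f_\mu$ with a small $\Omega$-preserving diffeomorphism supported in a thin annular neighborhood of the KAM circle and carrying a Fourier mode of order $M := N\gamma$ in the angular direction.

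Working in the coordinates given by $(\psi_\mu)$, set $\tilde f_\mu = \psi_\mu^{-1}\circ f_\mu\circ\psi_\mu$ on $S^1\times[-1,1]$; this map preserves $\beta_\mu\,dx\wedge dy$, rotates $S^1\times\{0\}$ by $\theta = p/N$ with $\gcd(p,N)=1$, and satisfies the twist condition, so by Lemma~\ref{lemma:twist} its $N$-th iterate $\tilde f_\mu^N$ also twists the circle and fixes every point of it. Choose a smooth cutoff $\chi\colon[-1,1]\to[0,1]$ equal to $1$ on a neighborhood of $0$ and supported in a small neighborhood of $0$, a small $\delta>0$, and consider the Hamiltonian
\begin{equation*}
K_\mu(x,y) \;=\; \frac{\delta}{2\pi M}\,\chi(y)\sin(2\pi M x)
\end{equation*}
on $S^1\times[-1,1]$. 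Let $P_\mu^\delta$ be the time-$1$ map of its Hamiltonian flow with respect to $\beta_\mu\,dx\wedge dy$, transported to $\Sigma$ via $\psi_\mu$ and extended by the identity, and set $\hat f_\mu := P_\mu^\delta\circ f_\mu$; for $\delta$ small, $(\hat f_\mu)_{\mu\in\Delta}$ lies in $\cV$.

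To count fixed points of $\hat f_\mu^N$, I would expand in $\delta$. At a point on $S^1\times\{0\}$, the first-order correction to $\tilde f_\mu^N$ is the sum of translates $\sum_{k=0}^{N-1}X_{K_\mu}(\cdot+k\theta,0)$ (up to pushforward factors from $D\tilde f_\mu^{N-1-k}$). Because $M\theta = \gamma p\in\ZZ$, each term equals the same nonzero multiple of $\cos(2\pi M x)\,\partial/\partial y$, so the averaging over the rotation orbit does not cancel and the combined $y$-component is a nonzero multiple of $\delta\cos(2\pi M x)$. The twist condition on $\tilde f_\mu^N$ lets one solve for $y$ as a function of $x$ on the fixed-point set by the implicit function theorem, reducing the fixed-point equation of $\hat f_\mu^N$ to $\cos(2\pi M x)=O(\delta)$, which has $2M = 2N\gamma$ non-degenerate solutions. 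A standard Poincar\'e--Birkhoff style computation of the linearization, using area preservation together with the sign of the twist, shows that these $2M$ fixed points alternate between elliptic centers and hyperbolic saddles along the circle. Since $\gcd(p,N)=1$ and $\hat f_\mu$ is $C^0$-close to $f_\mu$, each such fixed point has $\hat f_\mu$-orbit of cardinality exactly $N$, giving $M = N\gamma$ elements each in $\Per_h(\hat f_\mu,N)$ and $\Per_e(\hat f_\mu,N)$.

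The main obstacle is implementing the first-order expansion \emph{uniformly in $\mu\in\Delta$}: one must bound the $O(\delta^2)$ remainders across the whole parameter family so that the non-degeneracy of the fixed points provided by the implicit function theorem, as well as the alternation of their dynamical types, persist for every $\mu$. Uniformity is provided by the compactness of $\Delta$ together with the $C^\infty$-dependence of $\tilde f_\mu$, $\beta_\mu$, $\psi_\mu$, and the cutoff on $\mu$, so that all the constants entering the expansion are controlled globally over $\Delta$.
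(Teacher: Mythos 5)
Your approach is correct in principle, but it is genuinely different from the paper's, and the paper's version avoids the technical burden you flag at the end. The paper does not use a resonant Fourier mode plus implicit-function-theorem argument. Instead, it picks the periodic orbit in advance: it chooses the $2\gamma$ orbit starting points $x_{i,0}=\frac{i}{2\gamma N}$ on $S^1$, and constructs a cutoff function $h$ with $h'(x_{i,j})=0$ at every point $x_{i,j}=x_{i,0}+j\theta$ of every orbit, with $h''(x_{i,0})=(-1)^i$ and $h''(x_{i,j})=0$ for $1\le j\le N-1$. The resulting Hamiltonian shear $G_\mu^t(x,y)=(x,y+h'(x)t)$ then \emph{fixes} each $p_{i,j}$ exactly for all $t$, so $\{p_{i,0},\dots,p_{i,N-1}\}$ is an honest $N$-periodic orbit of $F_\mu\circ G_\mu^t$ with no need to locate it via IFT. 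Determining type reduces to a one-line trace computation: $\tr D(F_\mu\circ G_\mu^t)^N_{p_{i,0}} = \tr(DF_\mu^N)_{p_{i,0}} + (-1)^i t\,\frac{\partial}{\partial y}[P_x\circ F_\mu^N](p_{i,0})$, the first term being $2$ (parabolic identity on the invariant circle, volume preservation) and the second having sign $(-1)^i$ by Lemma~\ref{lemma:twist}. Your Poincar\'e--Birkhoff route with $\sin(2\pi M x)$ does work because $M\theta\in\ZZ$ kills the averaging, but it requires you to (a) track that the pushforward weights in $\sum_k D\tilde f_\mu^{N-1-k}X_{K_\mu}$ are merely all of one sign rather than ``the same nonzero multiple'' (they vary with $x$ and $k$, though the conclusion that $R_2(x)$ has exactly the $2M$ simple zeros of $\cos(2\pi Mx)$ survives); (b) carry out the alternation computation of the trace at the implicitly defined fixed points; and (c) control the $O(\delta^2)$ remainders uniformly over $\Delta$. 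The paper's ``prescribed-jet'' construction sidesteps all of (a)--(c), and the same device transfers cleanly to the real-analytic version (Proposition~\ref{prop:omega periodic}), where $h$ is replaced by a trigonometric polynomial satisfying the same finite jet conditions.
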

\begin{proof}
Since $f_\mu$ preserves the circle $\psi_\mu(S^1 \times \{0\})$,
 we can take $\delta>0$ such that
 the map $F_\mu=\psi_\mu^{-1} \circ f_\mu \circ \psi_\mu$
 is well-defined on $S^1 \times [-\delta,\delta]$.
Remark that $F_\mu(x,0)=(x+\theta,0)$
 and $(\del/\del y)[P_x \circ F_\mu](x,0)>0$ 
 for any $x \in S^1$.
Put $x_{i,j}=\frac{i}{2\gamma N}+j\theta \in S^1$
 and $p_{i,j}=(x_{i,j},0)$ for $i=0,\dots,2\gamma-1$ and $j=0,\dots,N$.
Then, $F_\mu(p_{i,j})=p_{i,j+1}$ and $p_{i,N}=p_{i,0}$.
In particular, $\{p_{i,0},\dots,p_{i,N-1}\}$
 is an $N$-periodic orbit of $F_\mu$.

Take a $C^\infty$ function $h$ on $S^1$
 such that the support of $h$ is contained in $S^1 \times (-\delta,\delta)$,
 $h'(x_{i,j})=0$ and
\begin{equation*}
 h''(x_{i,j}) =
\begin{cases}
 (-1)^i & (j=0) \\ 0 & (j=1,\dots,N-1)
\end{cases}
\end{equation*}
 for any $i=0,\dots,2\gamma-1$ and $j=0,\dots,N-1$.
Put
 $A_\mu=\psi_\mu(S^1 \times (-\delta,\delta))$ for $\mu \in \Delta$
 and define a $C^\infty$ function $H$ on $\Sigma \times \Delta$ by
 $H(\cdot,\mu)=0$ on $\Sigma \setminus A_\mu$
 and
\begin{equation*}
 H(\psi_\mu(x,y),\mu) = -h(x)
\end{equation*}
 for any $(x,y) \in S^1 \times [-\delta/2,\delta/2]$ and any $\mu \in \Delta$.
Let $\Psi_\mu$ be the Hamiltonian flow on $\Sigma$ associated with
 the smooth Hamiltonian function $H(\cdot,\mu)$.
Since the support of the flow $\Psi_\mu$ is contained in $A_\mu$,
 $G_\mu^t(x,y)=\psi_\mu^{-1} \circ \Psi_\mu^t \circ \psi_\mu(x,y)$
 is well-defined 
 for any $\mu \in \Delta$, $(x,y) \in S^1 \times (-\delta,\delta)$,
 and $t \in \RR$.

It is sufficient to show that
\begin{equation*}
\min\{\#\Per_h(F_\mu \circ G_\mu^t),N),
  \#\Per_e((F_\mu \circ G_\mu^t),N)\}\geq N \gamma.
\end{equation*}
 for any sufficiently small $t>0$.
By construction of the Hamiltonian function $H(\cdot,\mu)$,
 we have $ G_\mu^t(x,y)=(x, y+h'(x)t)$ and hence,
\begin{equation*}
 (DG_\mu^t)_{(x,y)}=
\begin{pmatrix} 1 & 0 \\ h''(x)t & 1.
\end{pmatrix}
\end{equation*}
 for any $(x,y) \in S^1 \times (-\delta/2,\delta/2)$
 and any $t$ with $|y+h'(x)t| \leq \delta/2$.
Since $h'(x_{i,j})=0$,
 each $p_{i,j}$ is a fixed point of $G_\mu^t$, and hence,
\begin{align}
\label{eqn:smooth perturb 1}
 (F_\mu \circ G_\mu^t)(p_{i,j}) & =F_{\mu}(p_{i,j})=p_{i,j+1}
\end{align}
 for any $t \geq 0$.
In particular,  $\{p_{i,0},\dots,p_{i,N-1}\}$ is
 an $N$-periodic orbit of $F_\mu \circ G_\mu^t$
 for each $i=0,\dots,2\gamma-1$.
By the condition on $h''$, we also have
\begin{align}
\label{eqn:smooth perturb 2}
D(F_\mu \circ G_\mu^t)_{p_{i,j}}
 & =
\begin{cases}
  (DF_\mu^{N})_{p_{i,0}}
 \begin{pmatrix} 1 & 0 \\ (-1)^i t & 1 \end{pmatrix}
 & (j=0)\\
  (DF_\mu^{N})_{p_{i,j}}
 & (j=1,\dots,N-1).
\end{cases}
\end{align}
To determine the hyperbolicity or ellipticity of
 the periodic orbit $\{p_{i,0},\dots,p_{i,N-1}\}$,
 it is sufficient to
 compute the trace of $D(F_\mu \circ G_\mu^t)^{N}_{p_{i,0}}$.
By Equations (\ref{eqn:smooth perturb 1}) and (\ref{eqn:smooth perturb 2}),
\begin{align}
\tr D(F_\mu \circ G_\mu^t)^{N}_{p_{i,0}}
 & =\tr\left[
 D(F_\mu \circ G_\mu^t)_{p_{i,N-1}}
 \cdots D(F_\mu \circ G_\mu^t)_{p_{i,0}}
  \right] \nonumber\\
 & = \tr\left[
 (DF_\mu^{N})_{p_{i,0}}
 \begin{pmatrix} 1 & 0 \\ (-1)^i t & 1 \end{pmatrix}
\right] \nonumber\\
\label{eqn:smooth trace}
 & = \tr (DF_\mu^{N})_{p_{i,0}}
 + (-1)^i t \cdot \frac{\del}{\del y}[P_x \circ F_\mu^{N}](p_{i,0}).
\end{align}
Since $F_\mu^{N}(x,0)=(x+N\theta,0)=(x,0)$ for any $x \in S^1$,
 the derivative $(DF_\mu^N)_{(x,0)}$ has the form
\begin{equation*}
\begin{pmatrix} 1 & * \\  0 & * \end{pmatrix}.
\end{equation*}
Since $F_\mu$ preserves a volume form,
 we have $\det (DF_\mu^{N})_{(x,0)}=1$, and hence,
 $\tr(DF_\mu^{N})_{(x,0)}=1$.
By Lemma \ref{lemma:twist},
 we also have $\frac{\del}{\del y}[P_x \circ F_\mu^{N}](x,0)>0$.
Hence, by Equation (\ref{eqn:smooth trace}),
 the $N$-periodic orbit $\{p_{i,0},\dots,p_{i,N-1}\}$
 of $(F_\mu \circ G_\mu^t)$ is hyperbolic if $j=0,\dots,2\gamma-1$ is even 
 and elliptic if $j$ is odd for any sufficiently small $t>0$.
This implies that
\begin{equation*}
\min\{\#\Per_h((F_\mu \circ G_\mu^t),N),
  \#\Per_e((F_\mu \circ G_\mu^t),N)\}\geq N \gamma.
\end{equation*}
For any sufficiently small $t>0$,
 the family $(F_\mu \circ G_\mu^t)_{\mu \in \Delta}$ is contained in $\cV$.
\end{proof}

Now, we prove Proposition \ref{prop:smooth periodic}.
Fix an open neighborhood $\cV$ of the family $(f_\mu)_{\mu \in \Delta}$,
 a sequence $(\gamma_n)_{n \geq 1}$ of positive integers,
 and $n_0 \geq 1$.
By Lemma \ref{lemma:smooth rational},
 there exist a rational number $\theta$
 and a family $(\bar{f}_\mu)_{\mu \in \Delta}$ in $\cV$
 such that the denominator $N$ of $\theta$ is greater than $n_0$
 and the family $(\bar{f}_\mu)_{\mu \in \Delta}$
 admits a KAM circle with rotation number $\theta$.
By Lemma \ref{lemma:smooth perturb},
 there exists a family $(\hat{f}_\mu)_{\mu \in \Delta}$ in $\cV$
 such that
\begin{equation*}
 \min\{\#\Per_h(\hat{f}_\mu,N),\#\Per_e(\hat{f}_\mu,N)\}
  \geq N\cdot \gamma_N.
\end{equation*}
 for any $\mu \in \Delta$.

\section{Real analytic case}
\label{sec:analytic}

\subsection{The $C^\omega$ topology}
\label{subsec:topology}
In this subsection, we describe the $C^\omega$-topology
 of the set of real-analytic Hamiltonian diffeomorphisms of $\TT^2$
 and show maps whose all $n$ periodic points are non-degenerate
 are open and dense with respect to the topology.

Let $M_1$ and $M_2$ be complex manifolds
 and $d_{M_2}$ a distance on $M_2$ compatible with the topology of $M_2$.
By $\cH(U,M_2)$, we denote the set of holomorphic maps from $M_1$ to $M_2$.
For a compact subset $K$ of $M_1$, we define a pseudo-distance $d_K$
 on $\cH(M_1,M_2)$ by
\begin{equation*}
 d_K(f,g)=\sup_{z \in K}d_{M_2}(f(z),g(z)).
\end{equation*}
The space $\cH(M_1,M_2)$ is endowed with the topology 
 defined by the family of pseudo-distances $d_K$,
 where $K$ runs over all compact subsets of $M_1$.

Recall that $\Ham^\omega(\TT^2)$ is
 the set of real-analytic Hamiltonian diffeomorphisms on $\TT^2$.
For an open neighborhood $U$ of $\TT^2$ in $\CC^2/\ZZ^2$,
 let $\Ham^\omega_U(\TT^2)$ be the set of maps in $\Ham^\omega(\TT^2)$
 which extend to holomorphic maps from $U$ to $\CC^2/\ZZ^2$.
Remark that the holomorphic extension to $U$ is unique
 for each map in $\Ham^\omega_U(\TT^2)$.
In the rest of this article,
 we identify a map $f$ in $\Ham^\omega_U(\TT^2)$
 and its holomorphic extension to $U$.
The space $\Ham^\omega(\TT^2)$
 is the union of $\Ham^\omega_U(\TT^2)$'s, where $U$ runs over
 all open neighborhoods of $\TT^2$ in $\CC^2/\ZZ^2$.
Each $\Ham^\omega_U(\TT^2)$ is endowed with
 the topology as a subset of $\cH(U,\CC^2/\ZZ^2)$.
{\it The $C^\omega$-topology} of $\Ham^\omega(\TT^2)$ is
 the direct limit topology associated with the inclusions
 $\Ham^\omega_U(\TT^2) \hookrightarrow \Ham^\omega(\TT^2)$.
More precisely,
 a subset $\cU$ of $\Ham^\omega(\TT^2)$ is open if and only if
 $\cU \cap \Ham^\omega_U(\TT^2)$ is an open subset of $\Ham^\omega_U(\TT^2)$
 for any open neighborhood $U$ of $\TT^2$ in $\CC^2/\ZZ^2$.
Remark that this topology is weaker than another topology
 on the space of  real-analytic maps defined by Takens \cite{Ta}
 (see also \cite{BT}).
Takens' topology has the Baire property, but our topology does not.
\footnote{
Takens' topology is too strong for our purpose.
For example,
 let $\Phi=(\Phi^t)_{t \in \RR}$ be a real-analytic flow on $\TT^2$
 generated by a real-analytic vector field $X$.
If $X$ does not extends to $\CC^2/\ZZ^2$, then
 the map  $t \mapsto \Phi^t \in \Diff^\omega(\TT^2)$
 is not  continuous with respect to Takens' $C^\infty$ topology
 while it is continuous with respect to our $C^\omega$ topology.
}

For $n \geq 1$,
 let $\ND_n$ be the subset of $\Ham^\omega(\TT^2)$
 consisting of maps whose
 all periodic points of period less than $n+1$ are non-degenerate.
\begin{prop}
\label{prop:omega non-degenerate}
Let $U$ be an openneighborhood of $\TT^2$ in $\CC^2/\ZZ^2$
 whose closure is compact.
Then, the subset $\ND_n \cap \Ham^\omega_U(\TT^2)$
 is open and dense in $\Ham^\omega_U(\TT^2)$
 for any $n \geq 1$.
\end{prop}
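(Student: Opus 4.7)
The plan is to establish openness and density separately. For openness, fix $f_0 \in \ND_n \cap \Ham^\omega_U(\TT^2)$. In the area-preserving surface setting, non-degeneracy of a period-$k$ point $p$ is equivalent to $|\tr Df_0^k(p)| \neq 2$, equivalently $\det(Df_0^k(p) - I) \neq 0$. I would apply the implicit function theorem to the equation $f^k(z) = z$ at each such $p$ to obtain a continuation $p(f)$ for $f$ in a $C^1$-neighborhood of $f_0$ on $\TT^2$, along which the trace varies continuously. Since the non-degenerate periodic points of period $\leq n$ are isolated and $\TT^2$ is compact, there are only finitely many; outside a small open neighborhood of them, $|f_0^k(z) - z|$ is bounded below for all $k \leq n$, and this bound is preserved under small $C^0$-perturbations, so no new period-$\leq n$ points appear. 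Since $C^\omega_U$-convergence of holomorphic extensions implies $C^1$-convergence on $\TT^2$, this yields openness of $\ND_n \cap \Ham^\omega_U(\TT^2)$ in the $C^\omega_U$-topology.

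For density, given $f_0 \in \Ham^\omega_U(\TT^2)$ and any neighborhood $\cV$ of $f_0$, I would perturb inside a finite-parameter family of real-analytic Hamiltonian perturbations and apply Sard's theorem. Pick a degree $D$ (to be chosen later) and let $H_1, \dots, H_N$ be a basis of the real-valued trigonometric polynomials on $\TT^2$ of degree at most $D$; each extends holomorphically to all of $\CC^2/\ZZ^2$, so $f_\epsilon := \phi^1_{H_\epsilon} \circ f_0$ with $H_\epsilon = \sum_j \epsilon_j H_j$ lies in $\Ham^\omega_U(\TT^2)$ and belongs to $\cV$ for $\epsilon$ in a small ball $B_0 \subset \RR^N$. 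For each $k \leq n$, define
\begin{equation*}
\Psi_k \colon \TT^2 \times B_0 \longrightarrow \TT^2 \times \RR, \qquad
\Psi_k(z,\epsilon) = \bigl(\, f_\epsilon^k(z) - z,\ (\tr Df_\epsilon^k(z))^2 - 4 \,\bigr),
\end{equation*}
whose zero set consists exactly of those $(z,\epsilon)$ with $z$ a degenerate period-$k$ point of $f_\epsilon$. The key transversality lemma I would need is that for $D$ sufficiently large in terms of $n$, $\Psi_k$ is a submersion at every point of $\Psi_k^{-1}(0,0)$. Granted this, $\Psi_k^{-1}(0,0)$ is a smooth submanifold of real codimension $3$ in the $(N+2)$-dimensional domain $\TT^2 \times B_0$, hence of dimension $N-1$, and by Sard's theorem its projection to $B_0$ has Lebesgue measure zero. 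Taking the union of these bad sets over $k = 1, \dots, n$ leaves a full-measure, hence dense, set of $\epsilon \in B_0$ for which $f_\epsilon \in \ND_n \cap \cV$.

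The main obstacle is the transversality lemma. At a degenerate point $(z_0, \epsilon_0)$ the linear map $Df_{\epsilon_0}^k(z_0) - I$ is singular, so the $z$-variation of $\Psi_k$ alone cannot be surjective, and the $\epsilon$-variation must supply the missing directions in $\TT^2 \times \RR \cong \RR^3$. A standard variation-of-parameters computation expresses $\partial_{\epsilon_j} f_\epsilon^k(z_0)|_{\epsilon=0}$ as a sum along the orbit of pushforwards of the Hamiltonian vector fields $X_{H_j}$ by iterates of $Df_0$, and a similar chain-rule expression governs $\partial_{\epsilon_j} \tr Df_\epsilon^k(z_0)$ in terms of the $1$-jets of $X_{H_j}$ at the orbit points. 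Because trigonometric polynomials of sufficiently high degree can prescribe arbitrary $1$-jets at any finite configuration of points of $\TT^2$ (and orbits of period $\leq n$ of maps close to $f_0$ have uniformly separated points, shrinking $B_0$ if necessary), once $D$ is chosen large enough in terms of $n$ the span of these parameter derivatives is all of $\RR^3$, yielding the required surjectivity. Combining density with the openness above completes the proof.
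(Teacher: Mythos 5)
Your openness argument coincides with the paper's: $C^\omega_U$-convergence gives uniform $C^1$-convergence on $\TT^2$ via Cauchy estimates, and non-degenerate periodic orbits of period $\leq n$ persist under $C^1$-small perturbations; both are correct.

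For density the routes differ. The paper works with a \emph{four}-parameter family: it takes the Hamiltonian vector fields $Y_1,\dots,Y_4$ of $\cos(2\pi x),\sin(2\pi x),\cos(2\pi y),\sin(2\pi y)$ (which span each tangent space $T_{(x,y)}\TT^2$), forms $f_t=\Psi_1^{t_1}\circ\cdots\circ\Psi_4^{t_4}\circ f_0$, considers the evaluation map $F_n(t,z)=(z,f_t^n(z))$, and invokes transversality of $F_n$ to the diagonal together with the Kupka--Smale template in Robinson's book. You instead inflate the parameter space to the full space of trigonometric-polynomial Hamiltonians of degree $\leq D$, assemble a map $\Psi_k(z,\epsilon)=\bigl(f_\epsilon^k(z)-z,\,(\tr Df_\epsilon^k(z))^2-4\bigr)$ whose zero set is exactly the bad locus, prove submersion by prescribing $1$-jets at the finitely many orbit points, and conclude by a codimension/Sard count. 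The pay-off of your heavier family is that the degeneracy condition you want to avoid --- $\tr Df^k_\epsilon=\pm 2$, not merely $\det(Df_\epsilon^k-I)=0$, i.e.\ $\tr=2$ --- is built into the target of $\Psi_k$ explicitly. Transversality of $F_n$ to the diagonal by itself only controls a $0$-jet condition and hence only rules out $\tr=2$; eliminating the parabolic-with-reflection case $\tr=-2$ is a $1$-jet statement and requires either your kind of enlarged family controlling $1$-jets, or a period-doubling trick (observing that $\tr Df^k=-2$ forces $\tr Df^{2k}=2$ and pushing transversality up to period $2n$). The paper leaves this to the citation; your version is more self-contained on precisely this point. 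Two details you should still pin down: (i) the variation-of-parameters formula for $\partial_{\epsilon_j}f_\epsilon^k$ and $\partial_{\epsilon_j}\tr Df_\epsilon^k$ must be evaluated at arbitrary $\epsilon_0\in B_0$, not just $\epsilon_0=0$, which is fine after shrinking $B_0$ but is worth saying; and (ii) the Hamiltonian constraint forces the linearization of $X_{H_j}$ to be traceless, so you should check that the trace-variation functional is not killed by this constraint --- it is not, since traceless $2\times2$ matrices already carry the shear directions that move $\tr Df^k$, but the sentence is needed.
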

\begin{proof}
By the $C^1$ persistence of non-degenerate periodic points,
 $\ND_n \cap \Ham^\omega_U(\TT^2)$ is an open subset of
 $\Ham^\omega_U(\TT^2)$.

To show density, we follow the argument in \cite[Section 11.3]{Ro}
 (see also \cite{BT}).
Let $Y_1,\dots,Y_4$ be the Hamiltonian vector fields associated to
 Hamiltonian functions $\cos(2\pi x)$, $\sin(2\pi x)$,
 $\cos(2\pi y)$ and $\sin(2\pi y)$.
Then, for any $(x,y) \in \TT^2$,
 the tangent space  $T_{(x,y)}\TT^2$
 is spanned by $\{Y_1(x,y), \dots, Y_4(x,y)\}$.
Let $\Psi_i$ be the Hamiltonian flow generated by the vector field $Y_i$.
Fix $f_0 \in \Ham^\omega_U(\TT^2)$
 and define a $C^\omega$ map
 $F_n:\TT^2 \times \RR^4 \ra \TT^2 \times \TT^2$ by
 $F_n(t_1,\dots,t_4,(x,y))=
 ((x,y), (\Psi_1^{t_1} \circ \dots \circ \Psi_4^{t_4} \circ f_0)^n(x,y))$.
Since $Y_i$ extends to $\CC^2/\ZZ^2$
 and the closure of $U$ is compact,
 $\Psi_i^t$ extends to $U$ for any sufficiently small $t>0$.
By the choice of $Y_1,\dots, Y_4$,
 the map $F_n$ is transverse to the diagonal
 $\{(p,p) \mid p \in \TT^2\}$ in $\TT^2 \times \TT^2$
 if $f$ belongs to $\ND_{n-1}$.
Using this map $F_n$,
 we can show that the density of $\ND_n$
 in $\Ham^\omega_U(\TT^2)$
 by the same argument as in \cite[Section 11.3]{Ro}.
\end{proof}

\subsection{Persistence of KAM curves}
\label{subsec:KAM}
Put $A(r)=\{z \in \CC/\ZZ \mid |\Img z|<r\}$ for $r >0$.
By $P_x$ and $P_y$,
 we denote the natural projections from $\TT^2$
 to the first and second coordinates.
For an open neighborhood $U$ of $\TT^2$ in $\CC^2/\ZZ^2$,
 a map $f \in \Ham^\omega_U(\TT^2)$, and a real number $\theta$,
 we call a holomorphic map $\vphi:A(r) \ra \CC^2/\ZZ^2$
 a {\it graph KAM curve} with rotation number $\theta$ if 
 $f \circ \vphi(z)=\vphi(z+\theta)$ for any $z \in A(r)$,
 $P_x \circ \vphi$ is a diffeomorphism between $A(r)$ onto its image,
 and the twist condition
\begin{equation*}
 \frac{\del}{\del y} (P_x \circ f)>0
\end{equation*}
 holds on $\vphi(S^1)$.
For a graph KAM curve $\vphi$ and $0<r' \leq r$, we put
\begin{equation*}
 U_{\vphi,r'}=\{(z_1,z_2) \in U \mid z_1 \in P_x \circ \vphi(A(r'))\}.
\end{equation*}

We say that a real number $\theta$ is {\it Diophantine} if
 there exists $\tau>0$ and $c>0$ such that
\begin{equation*}
 \left|\theta -\frac{p}{q}\right|\geq \frac{c}{|p|^{2+\tau}}
\end{equation*}
 for any rational number $p/q$.
We say that
 a map $f:S^1 \times (a,b) \ra S^1 \times \RR$ has
 {\it the intersection property} if for any continuous map
 $g:S^1 \ra (a,b)$ the curve $C_g=\{(x,g(x)) \mid x \in S^1\}$
 intersects with $f(C_g)$.
It is known that if $f_H$ is a Hamiltonian diffeomorphism of $\TT^2$
 and $\vphi:S^1 \times (a,b) \ra \TT^2$ be an embedding, then
 $\vphi^{-1} \circ f_H \circ \vphi$ satisfies the intersection property.
For $r>0$ and $\delta>0$, put
\begin{equation*}
 V(r,\delta)=\{(z_1,z_2) \in (\CC/\ZZ) \times \CC \mid 
 |\Img z_1|<r, |z_2|<\delta\}.
\end{equation*}
The following is a version of the KAM theorem
 proved in \cite[Section 32, p.230--235]{SM}.
\begin{thm}
\label{thm:KAM}
For any given Diophantine real number $\theta$,
 real numbers $\alpha>0$, $0<\hat{r}<r$, and $\eta>0$,
 there exists $0<\delta_*<1$ such that
 for any holomorphic map $F:V(r,\delta) \ra (\CC/\ZZ) \times \CC$
 and $0<\delta<\delta_*$ satisfying that
\begin{enumerate}
\item $F(S^1  \times (-\delta,\delta)) \subset S^1 \times \RR$, 
\item the restriction of $F$ on $S^1 \times (-\delta,\delta)$ has
 the intersection property, and
\item $\sup_{(z_1,z_2) \in V(r,\delta)}
 \|F(z_1,z_2)-(z_1+\theta+\alpha z_2,z_2)\|< \delta^{\frac{3}{2}}$,
\end{enumerate}
 we can find a holomorphic map
 $\psi:A(\hat{r}) \ra V(r,\delta)$
 such that $F \circ \psi(z)=\psi(z+\theta)$ for any $z \in A(\hat{r})$
 and $\sup_{z \in A(\hat{r})}\|\psi(z)-(z,0)\| <\eta$.\footnote{
In  \cite[Section 32, p.230--235]{SM},
 the proof is given for the case $\hat{r}=r/2$.
But, it is easy to show the theorem in our form
 by a modification of constants.
The constants $\delta$ and $\delta^{\frac{3}{2}}$ in the theorem correspond
 to $s_0$ and $d_0$ in \cite{SM}.}
\end{thm}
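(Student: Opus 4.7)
The plan is to run a KAM-style Newton iteration in the spirit of Siegel--Moser. The unperturbed twist $T(z_1,z_2) = (z_1+\theta+\alpha z_2,\,z_2)$ has the trivial invariant curve $\psi_0(z)=(z,0)$, so I would seek $\psi$ in the form $\psi(z) = (z+u(z),\,v(z))$ with $u,v$ holomorphic on a nested sequence of strips $A(r_n)$ shrinking geometrically to $A(\hat{r})$. Writing
\begin{equation*}
F(z_1,z_2) = (z_1+\theta+\alpha z_2+P(z_1,z_2),\ z_2+Q(z_1,z_2))
\end{equation*}
with $|P|,|Q|<\delta^{3/2}$ on $V(r,\delta)$, the initial defect $E_0 := F\circ\psi_0 - \psi_0(\,\cdot+\theta\,)$ has sup-norm $O(\delta^{3/2})$, and successive corrections $\psi_{n+1} = \psi_n + (\Delta u_n,\Delta v_n)$ are produced by linearizing the conjugacy equation at $\psi_n$.

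Letting $E_n := F\circ\psi_n - \psi_n(\,\cdot+\theta\,)$, the correction satisfies the pair of cohomological equations
\begin{align*}
\Delta u_n(z+\theta) - \Delta u_n(z) - \alpha\,\Delta v_n(z) &= E_n^{(1)}(z), \\
\Delta v_n(z+\theta) - \Delta v_n(z) &= E_n^{(2)}(z).
\end{align*}
Expanding in Fourier series on $S^1$, the nonzero modes are inverted by dividing by $1-e^{2\pi ik\theta}$, whose moduli the Diophantine condition bounds below by $c'|k|^{-(1+\tau)}$. Standard Paley--Wiener-type estimates on a strip then yield
\begin{equation*}
\|\Delta u_n\|_{r_n-\sigma_n} + \|\Delta v_n\|_{r_n-\sigma_n} \leq C\,\sigma_n^{-(3+\tau)}\,\|E_n\|_{r_n},
\end{equation*}
where $\sigma_n$ is the strip width sacrificed at step $n$. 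The zero Fourier mode of the second equation is obstructed: it is solvable only if $\int_{S^1}E_n^{(2)}=0$. This is precisely where the intersection property enters, since $\psi_n(S^1)$ must cross $F(\psi_n(S^1))$; after fixing the free constant of $u_n$ (say by $\int u_n=0$), this vanishing can be propagated through the iteration.

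With the standard choice $\sigma_n = (r-\hat{r})/2^{n+1}$, so that $\sum\sigma_n = r-\hat{r}$, the quadratic nature of the Newton step gives an error recursion
\begin{equation*}
\|E_{n+1}\|_{r_{n+1}} \leq C\,\sigma_n^{-2(3+\tau)}\,\|E_n\|_{r_n}^2,
\end{equation*}
and the superexponential decay of $\|E_n\|$ dominates the polynomial blow-up of $\sigma_n^{-2(3+\tau)}$ provided $\delta_* = \delta_*(\theta,\alpha,r-\hat{r},\eta)$ is chosen small enough. Summing the geometric series of corrections then furnishes both the holomorphic limit $\psi:A(\hat{r})\to V(r,\delta)$ and the proximity bound $\|\psi(z)-(z,0)\|<\eta$. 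Cauchy estimates in the $z_2$-direction cost a factor of $\delta^{-1}$; one absorbs this by shrinking the $y$-radius geometrically in tandem with $r_n$.

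The main obstacle is the well-known tension between small divisors and loss of analyticity: the polynomial factor $\sigma_n^{-(3+\tau)}$ at each step must be outrun by the quadratic convergence of Newton's scheme before the strip width reaches $\hat{r}$. A subsidiary difficulty is keeping the iterates $\psi_n$ inside $V(r,\delta)$ so that $F\circ\psi_n$ remains defined, and ensuring the intersection-property cancellation persists through the iteration; both are handled by an a priori smallness choice of $\delta_*$ depending on $\alpha$, $r-\hat{r}$, and $\eta$.
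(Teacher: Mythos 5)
The paper does not prove Theorem~\ref{thm:KAM}: it cites \cite[Section 32, p.230--235]{SM} and observes in a footnote that the proof given there for $\hat r = r/2$ adapts to the stated form, with $\delta$ and $\delta^{3/2}$ corresponding to the constants $s_0$ and $d_0$ in \cite{SM}. So your proposal is being compared against a citation, not a proof in the paper.

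That said, your sketch does reproduce the standard Siegel--Moser Newton scheme that underlies the reference: linearization about $(z,0)$, the two cohomological equations, small-divisor inversion via the Diophantine condition, geometric strip shrinkage, and quadratic convergence outpacing the polynomial loss-of-analyticity factors. You also correctly flag the two places where the structure of the hypotheses is actually used: the intersection property (for the zero Fourier mode of the second equation) and the interplay between the $\delta$-width of the domain and the $\delta^{3/2}$ smallness bound. Two points that remain too loose to count as a proof, and which are exactly where \cite{SM} does nontrivial work: (i) the intersection property does not literally give $\int_{S^1} E_n^{(2)} = 0$; it gives a point of agreement between $\psi_n(S^1)$ and its $F$-image, which, combined with the graph/twist structure, bounds the mean of the relevant error quadratically, and this is what lets one avoid modulating $\alpha$ or inserting a counterterm at each step. (ii) The claim that $\delta_*$ can be chosen so that a $\delta^{3/2}$-sized initial defect on a $\delta$-thick domain launches a convergent iteration is precisely the nontrivial bookkeeping that fixes the exponent $3/2$ (rather than $2$ or $1+\epsilon$); your remark about the $\delta^{-1}$ Cauchy cost is the right observation, but the sketch does not close that loop. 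Both gaps are quantitative rather than conceptual, so your approach is sound but would need \cite[Section~32]{SM}-level detail to be complete.
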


The following lemma corresponds to Proposition \ref{prop:smooth Moser}
 in real-analytic case. 
\begin{lemma}
\label{lemma:omega KAM} 
Let $U$ be an open neighborhood of $\TT^2$ in $\CC^2/\ZZ^2$,
 $f$ a diffeomorphism in $\Ham^\omega_U(\TT^2)$,
 $r>\hat{r}$ positive real numbers,
 $\vphi:A(r) \ra U$ a graph KAM curve with
 a Diophantine rotation number $\theta$,
 and $K$ a compact neighborhood of $\TT^2$ in $\CC^2/\ZZ^2$
 such that $K \subset U_{\vphi,\hat{r}}$.
Then, there exists a neighborhood $\cU$ of $f$ in $\Ham^\omega_U(\TT^2)$
 such that any $\hat{f} \in \cU$ admits a graph KAM curve
 $\hat{\vphi}:A(\hat{r}) \ra U$ with rotation number $\theta$
 such that $K \subset U_{\hat{\vphi},\hat{r}}$.
\end{lemma}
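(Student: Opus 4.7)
The plan is to reduce to Theorem \ref{thm:KAM} by introducing holomorphic symplectic coordinates in which $f$ takes an approximate twist-map form near the invariant curve $\vphi(S^1)$, and then applying the KAM theorem to perturbations $\hat{f}$.

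First I would enlarge $\vphi$ to a holomorphic symplectic embedding $\Phi : V(r_1,\delta_0) \to U$, for some $\hat{r} < r_1 < r$ and $\delta_0>0$, satisfying $\Phi(z_1,0)=\vphi(z_1)$ and $\Phi^*(dx \wedge dy)=dz_1 \wedge dz_2$. Such a holomorphic Darboux chart can be built by taking the time-$z_2$ flow of a holomorphic Hamiltonian vector field transverse to $\vphi(A(r_1))$, in the spirit of the construction in Lemma \ref{lemma:smooth rational}. By shrinking $\delta_0$ I may also arrange $K \subset \Phi(V(r_1,\delta_0/2))$, using the hypothesis $K \subset U_{\vphi,\hat r}$.

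In these coordinates $F := \Phi^{-1} \circ f \circ \Phi$ preserves $\{z_2=0\}$ and acts on it as $z_1 \mapsto z_1 + \theta$. Area preservation of $f$ together with the twist condition on $\vphi$ forces
\[
 F(z_1,z_2) = \bigl(z_1+\theta+a(z_1) z_2 + O(z_2^2),\; z_2 + O(z_2^2)\bigr)
\]
with $a(z_1)>0$ on $S^1$. A further symplectic change with generating function $z_1 z_2' + G(z_1) (z_2')^2$, where $G$ solves the cohomological equation $G(z_1+\theta)-G(z_1) = \alpha - a(z_1)$ and $\alpha$ is the mean of $a$ on $S^1$, normalizes the twist coefficient to the constant $\alpha$. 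The Diophantine hypothesis on $\theta$ makes this equation solvable holomorphically on a strip slightly thinner than $A(r_1)$. After a symplectic rescaling $(z_1,z_2) \mapsto (z_1, c z_2)$, on $V(r_1,\delta)$ one has
\[
 \|F(z_1,z_2) - (z_1 + \theta + \alpha z_2,\; z_2)\| \leq C \delta^2,
\]
which is smaller than $\delta^{3/2}$ once $\delta$ is small. The three hypotheses of Theorem \ref{thm:KAM} are then met by $F$ (hypothesis (2) holds since $F$ is the conjugate by a symplectic map of the Hamiltonian diffeomorphism $f$).

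Now I choose a neighborhood $\cU$ of $f$ in $\Ham^\omega_U(\TT^2)$ so small that for every $\hat{f} \in \cU$ the conjugate $\hat{F} = \Phi^{-1} \circ \hat{f} \circ \Phi$ still satisfies hypothesis (3) on the same $V(r_1,\delta)$; closeness in $\cH(U,\CC^2/\ZZ^2)$ transfers to uniform closeness of $\hat{F}$ to $F$, while (1) holds after possibly shrinking $\delta$ once more, and (2) persists since $\hat{f}$ is Hamiltonian. Applying Theorem \ref{thm:KAM} with the given $\hat{r}$ and a sufficiently small auxiliary $\eta>0$ yields a holomorphic $\psi:A(\hat{r}) \to V(r_1,\delta)$ with $\hat{F}\circ\psi(z) = \psi(z+\theta)$ and $\|\psi(z)-(z,0)\|<\eta$. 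Setting $\hat{\vphi} = \Phi \circ \psi$, for $\eta$ small enough $\hat\vphi$ is a graph KAM curve for $\hat{f}$: the condition that $P_x \circ \hat\vphi$ is a diffeomorphism onto its image and the twist inequality on $\hat\vphi(S^1)$ are $C^1$-open conditions that hold for $\vphi$, and the inclusion $K \subset U_{\hat\vphi,\hat r}$ follows from $K$ being compactly contained in $U_{\vphi,\hat r}$ together with the $\eta$-closeness of $\hat\vphi$ to $\vphi$.

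The main obstacle is the symplectic normalization that puts $f$ into the precise form required by Theorem \ref{thm:KAM}, with constant twist coefficient $\alpha$ and a purely quadratic error in $z_2$. This uses the Diophantine condition on $\theta$ to solve a cohomological equation in the holomorphic category, with a small but unavoidable loss in the strip width. Once this normal form is in place, the rest is a soft application of Theorem \ref{thm:KAM} together with the $C^1$-openness of the conditions defining a graph KAM curve.
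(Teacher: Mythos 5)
Your proposal is correct and follows essentially the same route as the paper: conjugate by a holomorphic area-preserving chart flattening $\vphi$ to $\{z_2=0\}$, normalize the twist coefficient to a constant by solving a cohomological equation (using the Diophantine condition), obtain the $C\delta^2<\delta^{3/2}$ estimate, apply Theorem \ref{thm:KAM} to nearby $\hat f$, and pull the invariant curve back. The only differences are cosmetic (the paper uses explicit shears $\psi_1,\psi_2$ rather than a Darboux chart and a generating function, and uses Cauchy's integral formula to make the ``$C^1$-openness'' of the graph and twist conditions quantitative).
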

\begin{proof}
Define a map $\psi_1:A(r) \times \CC \ra \CC^2/\ZZ^2$ by
\begin{equation*}
\psi_1(z_1,z_2)=\left(P_x \circ \vphi(z_1),
 \frac{1}{(P_x \circ \vphi)'(z_1)}\cdot z_2+P_y \circ \vphi(z_1)\right).
\end{equation*}
Then, $\psi_1(z,0)=\vphi(z)$ 
 and $f \circ \psi_1(z,0)=\psi_1(z+\theta,0)$.
Hence, $\psi_1(A(r') \times \{0\})$ is
 an $f$-invariant subset of $U_{\vphi,r}$ for any $0<r' \leq r$.
Take $r_1 \in (\hat{r},r)$ and  $\delta_1>0$ such that
 $\psi_1(V(r_1,\delta_1)) \subset U_{\vphi,r}$
 and $F_1=\psi_1^{-1} \circ f \circ \psi_1$ is
 well-defined on $V(r_1,\delta_1)$.
Remark that $F_1(z,0)=(z+\theta,0)$ for any $x \in A(r_1)$
 and the restriction of $F_1$ to $S^1 \times (-\delta_1,\delta_1)$
 preserves the standard volume form $dx \wedge dy$.
Put $\alpha(z)=(\del/\del y)[P_x \circ F_1](z,0)$ for $z \in A(r_1)$.
Then,  for any $x \in S^1$, we have $\alpha(x)>0$ and
\begin{equation*}
 (DF_1)_{(x,0)}=\begin{pmatrix} 1 & \alpha(x) \\ 0 & 1\end{pmatrix}.
\end{equation*}
Since $\theta$ is Diophantine, the equation
\begin{equation*}
 \alpha(x)=\beta(x+\theta)-\beta(x)+\bar{\alpha}
\end{equation*}
 has a solution $\beta:S^1 \ra \RR$ and $\alpha_* \in \RR$
 and the function $\beta$ extends to a holomorphic function on $A(r_1)$.
Since $\alpha(x)>0$ for any $x \in S^1$, we have $\alpha_*>0$.

Define a holomorphic embedding
 $\psi_2: A(r_1) \times \CC \ra (\CC/\ZZ) \times \CC$ by
\begin{equation*}
 \psi_2(z_1,z_2)=(z_1 +\beta(z_1) z_2, z_2).
\end{equation*}
Since $\psi_2(z,0)=(z,0)$,
 there exist $r_2 \in (\hat{r},r_1)$ and $\delta_2 \in (0,\delta_1)$
 such that $F_2=\psi_2^{-1} \circ F_1 \circ \psi_2$ is well-defined on
 $V(r_2,\delta_2)$.
The map $F_2$ satisfies that $F_2(x,0)=(x+\theta,0)$ and
\begin{equation*}
 (DF_2)_{(x,0)}=
\begin{pmatrix} 1 & \beta(x+\theta) \\ 0 & 1 \end{pmatrix}^{-1}
\begin{pmatrix} 1 & \alpha(x) \\ 0 & 1 \end{pmatrix}
\begin{pmatrix} 1 & \beta(x) \\ 0 & 1 \end{pmatrix}
 = \begin{pmatrix} 1 & \alpha_* \\ 0 & 1 \end{pmatrix}.
\end{equation*}
This implies that
 there exist $C>0$, $r_3 \in (\hat{r},r_2)$, and $\delta_3 \in (0,\delta_2)$
 such that
\begin{equation}
\label{eqn:oemga KAM 1}
 \sup_{(z_1,z_2) \in V(r_3,\delta)}
  \|F_2(z_1,z_2) - (z_1+\theta+\alpha_* z_2,z_2)\|\leq C\delta^2
\end{equation}
 for any $\delta \in (0,\delta_3)$.
Put $\psi=\psi_1 \circ \psi_2$.
Then, $\psi(V(r_2,\delta_2)) \subset U_{\vphi,r}$,
 $\psi(z,0)=\vphi(z)$, and
 $F_2=\psi^{-1} \circ f \circ \psi$ has the intersection property.
Fix $r_4 \in (\hat{r},r_3)$.
By Cauchy's integral formula, there exists $\eta>0$ such that
 for any holomorphic map $\Phi:A(r_4) \ra (\CC/\ZZ)\times \CC$
 with $\sup_{z \in A(r_4)}\|\Phi(z)-(z,0)\|<\eta$,
 the the restriction of $P_x \circ (\psi \circ \Phi)$ to $A(\hat{r})$
 is a holomorphic diffeomorphism onto its image
 and $K \subset U_{\psi \circ \Phi,\hat{r}}$.

Let $\delta_* \in (0,\delta_3)$ be the constant in Theorem \ref{thm:KAM}
 for $\theta, \alpha_*, r_3>r_4$ and $\eta$.
Take $\hat{\delta} \in (0,\delta_*)$ such that
 $C\hat{\delta}^2<\hat{\delta}^{\frac{3}{2}}/2$.
Since $F_2=\psi^{-1} \circ f \circ \psi$ is well-defined on $V(r_2,\delta_2)$,
 there exists a neighborhood $\cU$ of $f$ in $\Ham^\omega_U(\TT^2)$
 such that for any $\hat{f} \in \cU$,
 $\psi^{-1} \circ \hat{f} \circ \psi$ is well-defined on $V(r_3,\delta_3)$
 and
\begin{equation*}
 \sup_{(z_1,z_2) \in V(r_3,\delta_3)}
 \|\psi^{-1} \circ \hat{f} \circ \psi(z_1,z_2)-F_2(z_1,z_2)\|
 <\frac{\delta^{\frac{3}{2}}}{2}.
\end{equation*}
With (\ref{eqn:oemga KAM 1}),
 we have
\begin{equation*}
 \sup_{(z_1,z_2) \in V(r_3,\delta_3)}
 \|\psi^{-1} \circ \hat{f} \circ \psi(z_1,z_2)
 -(z_1+\theta+\alpha_* z_2,z_2)\|
 <\delta^{\frac{3}{2}}
\end{equation*}
 for any $\hat{f} \in \cU$.
By Theorem \ref{thm:KAM},
 for $\hat{f} \in \cU$
 there exists a holomorphic map
 $\Phi:A(r_4) \ra V(r_3,\delta_3)$
 such that
 $(\psi^{-1} \circ \hat{f} \circ \psi) \circ \Phi(z)=\Phi(z+\theta)$
 and $\sup_{z \in A(r_4)}\|\psi(z)-(z,0)\|<\eta$.
Put $\hat{\vphi}=\psi \circ \Phi$.
Then, $\hat{f} \circ \hat{\vphi}(z)=\hat{\vphi}(z+\theta)$,
 $P_x \circ \hat{\vphi}:A(\hat{r}) \ra \CC/\ZZ$ is a holomorphic diffeomorphism
 onto its image, and $K \subset U_{\hat{\vphi},\hat{r}}$.
This means that $\hat{\vphi}$ is a graph KAM curve of $\hat{f}$
 such that $K \subset U_{\hat{\vphi},\hat{r}}$.
\end{proof}

\subsection{Proof for the real-analytic case}
\label{subsec:analytic}
The following proposition corresponds to
 Proposition \ref{prop:smooth periodic} in real-analytic case.
\begin{prop}
\label{prop:omega periodic}
Let $U_0$ be an open neighborhood of $\TT^2$ in $\CC^2/\ZZ^2$,
 $f_0$ a diffeomorphism in $\Ham^\omega_{U_0}(\TT^2)$,
 $r_0>r'>r_1$ positive real numbers,
 $\vphi_0:A(r_0) \ra U_0$ a graph KAM curve of $f_0$,
 and $K$ a compact subset of $(U_0)_{\vphi_0,r_1}$.
Then,  there exists an open neighborhood $U_1$ of $\TT^2$
 which satisfies the following conditions:
\begin{enumerate}
\item The closure of $U_1$ is compact.
\item $\vphi_0(A(r')) \subset U_1$ and $K \subset (U_1)_{\vphi_0,r_1}$.
\item For any sequence $(\gamma_n)_{n \geq 1}$ of positive integers,
 any $n_0 \geq 1$, and
 any neighborhood $\cU$ of $f_0$ in $\Ham^\omega_{U_1}(\TT^2)$,
 there exist $f_1 \in \cU$ and $n_1> n_0$ such that
\begin{equation*}
 \min\{\#\Per_h(f_1,n_1),\#\Per_e(f_1,n_1)\} \geq n_1 \cdot \gamma_{n_1}.
\end{equation*}
\end{enumerate}
\end{prop}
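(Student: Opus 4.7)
The argument mirrors the proof of Proposition~\ref{prop:smooth periodic}, replacing the compactly supported cutoffs used in the smooth case by explicit globally-defined analytic Hamiltonians. Writing $\vphi_0(z)=(\xi(z),\eta(z))$ and setting $\tilde\eta(x)=\eta(\xi^{-1}(x))$, $G(x)=\xi'(\xi^{-1}(x))$, all three functions extend holomorphically to $\xi(A(r_0))\subset\CC/\ZZ$. I choose $U_1$ to be any relatively compact open neighborhood of $\TT^2$ with $\vphi_0(\overline{A(r')})\cup K\subset U_1\subset U_0$ whose projection onto the $z_1$-axis lies inside $\xi(A(r_0))$; conditions (1) and (2) then hold by construction.

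Given $\cU$ a neighborhood of $f_0$ in $\Ham^\omega_{U_1}(\TT^2)$ and $n_0\geq 1$, I produce $f_1\in\cU$ in two perturbation steps. The first is an analytic analog of Lemma~\ref{lemma:smooth rational}. The Hamiltonian
\[
 H_0(x,y)=G(x)\cdot\frac{\sin(2\pi(y-\tilde\eta(x)))}{2\pi}
\]
is holomorphic on $U_1$ by choice of $U_1$, and a direct computation gives $X_{H_0}(\vphi_0(z))=\vphi_0'(z)$. By ODE uniqueness, its Hamiltonian flow $\Phi_0^t$ preserves the image of $\vphi_0$ with $\Phi_0^t(\vphi_0(z))=\vphi_0(z+t)$. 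Let $\theta_0$ denote the rotation number of $\vphi_0$ under $f_0$. For any rational $p/N$ with $N>n_0$ sufficiently close to $\theta_0$, the map $\bar f:=f_0\circ\Phi_0^{-(\theta_0-p/N)}$ lies in $\cU$, and $\vphi_0$ remains a graph KAM curve of $\bar f$ with rotation number exactly $p/N$.

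The second step is an analytic analog of Lemma~\ref{lemma:smooth perturb}. Set $\gamma=\gamma_N$, $z_{i,j}=i/(2\gamma N)+jp/N$, $u_{i,j}=\xi(z_{i,j})\in S^1$, and $p_{i,j}=\vphi_0(z_{i,j})$ for $i=0,\dots,2\gamma-1$ and $j=0,\dots,N-1$. By Hermite trigonometric interpolation at the $2\gamma N$ distinct points $u_{i,j}$ -- concretely, by taking a suitable real linear combination of the basis polynomials
\[
 T_i(u)=\Bigl(\prod_{(k,l)\neq(i,0)}\sin^4(\pi(u-u_{k,l}))\Bigr)\bigl(\cos(2\pi(u-u_{i,0}))-1\bigr),
\]
each of which vanishes to order $\geq 3$ at every $u_{k,l}$ with $(k,l)\neq(i,0)$ and to order exactly $2$ at $u_{i,0}$ -- one obtains a real trigonometric polynomial $\tilde H$ on $S^1$ satisfying $\tilde H'(u_{i,j})=0$ for all $(i,j)$, $\tilde H''(u_{i,j})=0$ for $j\neq 0$, and $\tilde H''(u_{i,0})=(-1)^i$. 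Setting $H_1(z_1,z_2)=-\tilde H(z_1)$ (entire on $\CC^2/\ZZ^2$), the Hamiltonian flow $\Phi_1^t(z_1,z_2)=(z_1,z_2+t\tilde H'(z_1))$ fixes every $p_{i,j}$. For $t>0$ small enough, $f_1:=\bar f\circ\Phi_1^t\in\cU$.

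Each $\{p_{i,0},\dots,p_{i,N-1}\}$ is then an $N$-periodic orbit of $f_1$. Since $D\Phi_1^t|_{p_{i,j}}=I$ for $j\neq 0$, after passing to a chart adapted to $\vphi_0$ the trace computation of Lemma~\ref{lemma:smooth perturb} applies and shows that the $O(t)$ perturbation of $\tr Df_1^N|_{p_{i,0}}$ has sign $(-1)^i$ times a positive quantity, the positivity coming from Lemma~\ref{lemma:twist}. Hence for small $t>0$, $\gamma$ orbits are hyperbolic and $\gamma$ are elliptic, giving $\min\{\#\Per_h(f_1,N),\#\Per_e(f_1,N)\}\geq\gamma N=N\gamma_N$; setting $n_1=N$ finishes the proof. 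The main technical obstacle is constructing the globally analytic tangent Hamiltonian $H_0$: cutoffs are unavailable in the real-analytic category, so an explicit closed-form ansatz is needed, and $U_1$ must be chosen compatibly with the holomorphic extensions of $\tilde\eta$ and $G$; the remainder of the argument is a bookkeeping adaptation of the smooth case together with a Hermite interpolation whose solvability is secured by the explicit basis above.
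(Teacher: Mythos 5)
Your proof is correct and follows essentially the same route as the paper's: the tangent Hamiltonian $H_0$ you write down (with $G=\xi'\circ\xi^{-1}$ and $\tilde\eta=\eta\circ\xi^{-1}$) is exactly the paper's $H_1(z_1,z_2)=\frac{1}{2\pi}(\vphi_x)'(z_1)\sin[2\pi(z_2-g(z_1))]$ in slightly cleaner notation, the rational adjustment of the rotation number via the flow along the curve is the same step, and the trigonometric Hermite interpolant replacing the smooth bump $h$ is the same idea. The only genuine addition is that you give an explicit basis $T_i$ for the interpolation where the paper merely asserts existence ``using trigonometric functions,'' which is a nice concrete touch but not a different approach.
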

\begin{proof}
Proof is analogous to Proposition \ref{prop:smooth periodic}.

Since $K \cup \vphi_0(\cl{A(r')})$
 is a compact subset of $(U_0)_{\vphi_0,r_0}$,
 we can take a compact subsets $K'$ and $K_1$ of $(U_0)_{\vphi_0,r_0}$
 such that  $K \cup \vphi_0(\cl{A(r_1)}) \subset \Int K_1$
 and $K_1 \subset \Int K'$.
Let $U_1$ be interior of $K_1$.
Then, the closure of $U_1$ is compact,
 $K \subset (U_1)_{\vphi_0,r_1}$, and
 $\vphi_0(\cl{A(r_1)}) \subset U_1$.
Put $\vphi_x=P_x \circ \vphi_0$ and
 $g=(P_y \circ \vphi_0) \circ \vphi_x^{-1}$.
Define a holomorphic function $H_1$ on $(U_0)_{\vphi_0,r_0}$ by
\begin{equation*}
 H_1(z_1,z_2)=\frac{1}{2\pi}(\vphi_x)'(z_1) \cdot \sin [2\pi(z_2-g(z_1))].
\end{equation*}
Let $X_1$ be the Hamiltonian vector field associated with
 the Hamiltonian function $H_1$
 and $\Phi_1^t$ the (local) flow generated by $X_1$.
The vector field $X_1$ extends to a holomorphic vector field
 on $(U_0)_{\vphi_0,r_0}$, and hence,
 there exists $T_1>0$ such that $\Phi_1^t$ extends to $K'$
 for any $t \in [0,T_1]$.
The vector field $X_1$ satisfies that
\begin{align*}
 X_1(\vphi_0(z)) & = X_1(\vphi_x(z), g(\vphi_x(z))) \\
 & = \vphi_x'(z)
  \left(\frac{\del }{\del x}+g'(\vphi_x(z))\frac{\del}{\del y}\right)\\
 & = (D\vphi_0)_z\left(\frac{\del}{\del z}\right).
\end{align*}
This implies that $\Phi_1^t(\vphi_0(z))=\vphi_0(z+t)$
 for any $z \in A(r_1)$ and $t \in [0,T_1]$.

Fix a sequence $(\gamma_n)_{n \geq 1}$ of positive integers, $n_0 \geq 1$,
 and an open neighborhood $\cU$ of $f_0$ in $\Ham^\omega_{U_1}(\TT^2)$.
Let $\theta_0$ be the rotation number of the KAM curve $\vphi_0$.
Take $t_1 \in (0,T_1)$ such that 
 $\theta_1=\theta_0+t_1$ is a rational number with denominator $n_1> n_0$
 and $f_0 \circ \Phi_1^{t_1}$ is contained in $\cU$.
Put $\hat{f}=f_0 \circ \Phi_1^{t_1}$,
 $\gamma=\gamma_{n_1}$,
 $p_{i,j}=\vphi_0\left(\frac{i}{2\gamma {n_1}}+j\theta_1\right)$,
 and $x_{i,j}=P_x(p_{i,j})$
 for $i=0,\dots,2\gamma-1$ and $j \geq 0$.
Then,
\begin{equation*}
\hat{f}(x_{i,j},g(x_{i,j}))=\hat{f}(p_{i,j})
 =p_{i,j+1} =(x_{i,j+1},g(x_{i,j+1})). 
\end{equation*}

Using trigonometric functions,
 we can construct a real-analytic function $h_2$ on $S^1$
 which extends to $\CC/\ZZ$ and
 satisfies that $h_2'(x_{i,j})=0$, $h_2''(x_{i,0})=(-1)^i$,
 and $h_2''(x_{i,1})=\dots=h_2''(x_{i,n_1-1})=0$
 for any $i=0,\dots,2\gamma-1$ and $j=0,\dots,n_1-1$.
Put $H_2(x,y)=-h_2(x)$ for $(x,y) \in \TT^2$ and
 let $\Phi_2$ be the Hamiltonian flow
 associated with the Hamiltonian function $H_2$.
The flow $\Phi_2$ satisfies that $\Phi_2^t(x,y)=(x,y+h_2'(x)t)$
 for any $(x,y) \in \TT^2$ and $t \geq 0$.
Since $H_2$ extends to $\CC^2/\ZZ^2$,
 there exists $T_2>0$ such that $\Phi_2^t$ extends to $K_2$
 for any $t \in [0,T_2]$.

Since $\hat{f}^{n_1} \circ \vphi_0(x)=\vphi_0(x+{n_1}\theta_1)=\vphi_0(x)$
 for any $x \in S^1$ and
 $\hat{f}$ preserves the standard volume form $dx \wedge dy$ of $\TT^2$,
 we have $ \tr D\hat{f}^{n_1}_{p_{i,j}}=1$.
By the same argument as in Lemma \ref{lemma:smooth perturb},
 we can show that
$\{p_{i,0}, \dots, p_{i,n_1-1}\}$ is a periodic orbit of
 $\hat{f} \circ \Phi_2^t$ which is hyperbolic if $i$ is even
 and elliptic if $i$ is odd for any $t \in (0,T_2)$.
Take a small $T \in (0,T_2)$ such that
 $\hat{f} \circ \Phi_2^T$ is contained in $\cU$.
Then, $f_1=\hat{f} \circ \Phi_2^T$ satisfies that
\begin{equation*}
 \min\{\#\Per_h(f_1,n_1),\#\Per_e(f_1,n_1)\} \geq n_1 \gamma.
\end{equation*}
\end{proof}

With Lemma \ref{lemma:omega KAM} and
 Proposition \ref{prop:omega periodic},
 we show that any Hamiltonian diffeomorphism with a graph KAM curve
 can be approximated by
 another Hamiltonian diffeomorphism which admits
 a graph KAM curve and many periodic points of high period.
Recall that $\ND_n$ is the subset of $\Ham^\omega(\TT^2)$
 consisting of maps such that
 all periodic points of period less than $n+1$ are non-degenerate.
\begin{lemma}
\label{lemma:omega perturb} 
Let $(\gamma_n)_{n \geq 1}$ be  a sequence of positive integers,
 $K$ a compatc subset of $\CC^2/\ZZ^2$,
 and $\theta$ a Diophantine real number.
Take an open neighborhood $U_0$ of $\TT^2$ in $\CC^2/\ZZ^2$,
 a diffeomorphism $f_0$ in $\Ham^\omega_{U_0}(\TT^2)$,
 positive numbers $r_0>r_1$,
 and a graph KAM curve $\vphi_0:A(r_0) \ra U_0$ with rotation number $\theta$
 such that $K \subset (U_0)_{\vphi_0,r_1}$.
For any given $n_0 \geq 1$ and $\epsilon>0$,
 there exists $n_1> n_0$,
 an open neighborhood $U_1$ of $\TT^2$ in $\CC^2/\ZZ^2$,
 a diffeomorphism $f_1$ in $\Ham^\omega_{U_1}(\TT^2) \cap \ND_{n_1}$,
 and $\vphi_1:A(r_1) \ra U_1$ a graph KAM curve with rotation number $\theta$
 such that $K \subset (U_1)_{\vphi_1,r_1}$,
 $d_K(f_1,f_0) <\epsilon$, and
\begin{equation*}
\min\{\#\Per_h(f_1,n_1), \#\Per_e(f_1,n_1)\} \geq n_1 \cdot \gamma_{n_1}.
\end{equation*} 
\end{lemma}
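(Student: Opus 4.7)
The plan is to run the three real-analytic tools we have developed---Proposition \ref{prop:omega periodic} to create periodic points, Lemma \ref{lemma:omega KAM} to propagate the graph KAM curve, and Proposition \ref{prop:omega non-degenerate} to eliminate degeneracies---in sequence, all taking place inside a single space $\Ham^\omega_{U_1}(\TT^2)$ for a carefully chosen $U_1$.

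First I would enlarge $K$ to a compact neighborhood $K^*$ of $\TT^2$ contained in $(U_0)_{\vphi_0, r_1}$ (possible since $(U_0)_{\vphi_0, r_1}$ is an open neighborhood of $\TT^2$) and pick an intermediate radius $r' \in (r_1, r_0)$. Applying Proposition \ref{prop:omega periodic} to the data $(U_0, f_0, r_0 > r' > r_1, \vphi_0, K^*)$ produces an open neighborhood $U_1$ of $\TT^2$ with compact closure, containing $\vphi_0(A(r'))$ and satisfying $K^* \subset (U_1)_{\vphi_0, r_1}$, such that every $C^\omega$-neighborhood of $f_0$ in $\Ham^\omega_{U_1}(\TT^2)$ contains a map with the desired abundance of periodic points at some period larger than $n_0$.

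Next, I would apply Lemma \ref{lemma:omega KAM} with $U=U_1$, $\vphi = \vphi_0|_{A(r')}$, radii $r' > r_1$, and compact set $K^*$, obtaining a $C^\omega$-neighborhood $\cU_1$ of $f_0$ in $\Ham^\omega_{U_1}(\TT^2)$ such that every $\hat f \in \cU_1$ admits a graph KAM curve $\hat\vphi : A(r_1) \to U_1$ of rotation number $\theta$ with $K \subset K^* \subset (U_1)_{\hat\vphi, r_1}$. After shrinking $\cU_1$ so that $d_K(\hat f, f_0) < \epsilon/2$ throughout, property~(3) of Proposition \ref{prop:omega periodic} applied to $\cU_1$ yields some $\tilde f_1 \in \cU_1$ and $n_1 > n_0$ with $\min\{\#\Per_h(\tilde f_1, n_1), \#\Per_e(\tilde f_1, n_1)\} \geq n_1 \gamma_{n_1}$.

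For the last step, since $\cl{U_1}$ is compact, Proposition \ref{prop:omega non-degenerate} guarantees that $\ND_{n_1} \cap \Ham^\omega_{U_1}(\TT^2)$ is open and dense in $\Ham^\omega_{U_1}(\TT^2)$. The constructed hyperbolic and elliptic periodic points of $\tilde f_1$ are non-degenerate, hence $C^1$-persistent, and by Cauchy's integral formula $C^\omega$-smallness in $\Ham^\omega_{U_1}(\TT^2)$ implies $C^1$-smallness on $\TT^2$. I would therefore choose a $C^\omega$-neighborhood $\cW \subset \cU_1$ of $\tilde f_1$ inside which all these periodic points persist and $d_K(\cdot, \tilde f_1) < \epsilon/2$, and then pick $f_1 \in \cW \cap \ND_{n_1}$ by density. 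This $f_1$ satisfies all the required conclusions, with $\vphi_1$ supplied by Lemma \ref{lemma:omega KAM} from the KAM step. The main conceptual obstacle is the failure of the Baire property for the $C^\omega$ topology on $\Ham^\omega(\TT^2)$: we cannot just intersect countably many open dense sets. The entire strategy is organized to sidestep this by performing all perturbations inside the single space $\Ham^\omega_{U_1}(\TT^2)$, where Proposition \ref{prop:omega non-degenerate} does provide density; choosing $U_1$ with compact closure so that it simultaneously supports KAM persistence, the insertion of periodic points, and the density of $\ND_{n_1}$ is the heart of the matter.
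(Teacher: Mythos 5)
Your proof is correct and follows essentially the same route as the paper: both use Proposition~\ref{prop:omega periodic} to choose a single ambient space $\Ham^\omega_{U_1}(\TT^2)$ with compactly-closed $U_1$, then apply Lemma~\ref{lemma:omega KAM} to obtain a neighborhood $\cU_1$ of $f_0$ in which graph KAM curves persist, and finally produce the periodic points inside $\cU_1$ and use density of $\ND_{n_1}$ (Proposition~\ref{prop:omega non-degenerate}) together with persistence of the non-degenerate periodic orbits to land on $f_1 \in \cU_1 \cap \ND_{n_1}$. The only difference from the paper is bookkeeping: you explicitly introduce a second shrinking neighborhood $\cW$ around $\tilde f_1$ and split $\epsilon$ into two halves, whereas the paper folds the $\ND_{n_1}$-approximation into the phrase ``we may assume $f_\cU \in \ND_{n_\cU}$'' via lower semi-continuity of the counts — logically the same step.
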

\begin{proof}
Take $r' \in (r_1,r_0)$.
By Proposition \ref{prop:omega periodic},
 we can find an open neighborhood $U_1$ of $\TT^2$
 satisfying the following conditions:
\begin{enumerate}
\item The closure of $U_1$ is compact.
\item $\vphi_0(A(r')) \subset U_1$ and $K \subset (U_1)_{\vphi_0,r_1}$.
\item For any neighborhood $\cU$ of $f_0$ in $\Ham^\omega_{U_1}(\TT^2)$,
 there exists $f_{\cU} \in \Ham^\omega_{U_1}(\TT^2)$
 and $n_\cU >n_0$ such that
\begin{equation*}
 \min\{\#\Per_h(f_\cU,n_\cU), \#\Per_e(f_\cU, n_\cU)\}
  \geq n_\cU \cdot \gamma_{n_\cU}.
\end{equation*}
\end{enumerate}
The set $\ND_n \cap \Ham^\omega_{U_1}(\TT^2)$ is dense in
 $\Ham^\omega_{U_1}(\TT^2)$ by Proposition \ref{prop:omega non-degenerate}.
Since the numbers $\#\Per_h(f,n)$ and $\#\Per_e(f,n)$
 are lower semi-continuous with respect to $f$,
 we may assume that $f_{\cU}$ is contained in $\ND_{n_{\cU}}$
 by approximating $f_{\cU}$ by a map in $\ND_{n_\cU}$ if it is necessary.
Applying  Lemma \ref{lemma:omega KAM}
 to $U_1$, $f_0$, $r_1<r'$, $\theta$, and $\vphi_0:A(r') \ra U_1$,
 we can find a neighborhood $\cU_1$ of $f_0$
 in $\Ham^\omega_{U_1}(\TT^2)$
 such that any $\hat{f} \in \cU_1$ admits a graph KAM curve
 $\vphi_{\hat{f}}:A(r_1) \ra U_1$ with rotation number $\theta$
 such that $K \subset (U_1)_{\vphi_{\hat{f}},r_1}$.
By shrinking $\cU_1$ if it is necessary, we may assume that
 $\cU_1 \subset \{\hat{f} \in \Ham^\omega_{U_1}(\TT^2) 
 \mid d_K(\hat{f},f_0)<\epsilon\}$.
Put $n_1=n_{\cU_1}$, $f_1=f_{\cU_1}$,
 and $\vphi_1=\vphi_{f_1}$.
Then, the triple $(n_1, f_1,\vphi_1)$ satisfies the required conditions.
\end{proof}

Next, we show that
 any Hamiltonian diffeomorphism with a graph KAM curve
 can be approximated by another one
 which exhibits fast growth of the number of periodic points.
Put $\ND_\infty=\bigcap_{n \geq 1} \ND_n$.
\begin{prop}
\label{prop:omega perturb}
Let $U_0$ be an open neighborhood of $\TT^2$ in $\CC^2/\ZZ^2$,
 $f_0$ a diffeomorphism in $\Ham^\omega_{U_0}(\TT^2)$,
 $\vphi:A(r_0) \ra U_0$ a graph KAM curve with
 a Diophantine rotation number,
 $K$ a compact subset of $(U_0)_{\vphi_0,r_0}$,
 and $(\gamma_n)_{n \geq 1}$ a sequence of positive integers.
Then, we can find an open subset $U_\infty$ of $U_0$ such that
\begin{enumerate}
\item $K \subset U_\infty$ and
\item for any neighborhood $\cU$ of $f_0$ in $\Ham^\omega_{U_\infty}(\TT^2)$,
 there exists $f_\infty \in \cU \cap \ND_\infty$ such that
\begin{equation}
\label{eqn:analytic fast}
\limsup_{n \ra \infty}
 \frac{1}{\gamma_n}\min\{\#\Per_h(f_\infty,n),\#\Per_e(f_\infty,n)\}=\infty.
\end{equation}
\end{enumerate}
\end{prop}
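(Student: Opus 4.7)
The plan is to obtain $f_\infty$ as the limit of a Cauchy sequence in $\Ham^\omega_{U_\infty}(\TT^2)$, built by iterating Lemma \ref{lemma:omega perturb}. Each application of the lemma both creates many new non-degenerate hyperbolic and elliptic periodic orbits of some large new period and leaves in place a fresh graph KAM curve with the same Diophantine rotation number $\theta$ to seed the next step, so the induction can be continued indefinitely. A diagonal choice of decreasing perturbation sizes then guarantees that the limit inherits, for every stage, both the periodic-orbit count and the non-degeneracy of all periodic orbits up to that period.

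Ambient choices. Since $K$ is compact in the open set $(U_0)_{\vphi_0, r_0}$, which also contains $\TT^2$, I fix a compact set $\tilde K$ with $K \cup \TT^2 \subset \Int \tilde K$ and $\tilde K \subset (U_0)_{\vphi_0, r_0}$, and set $U_\infty := \Int \tilde K$. Pick a strictly decreasing sequence $r_0 > r_1 > \cdots > 0$ and a summable sequence of positive errors $(\epsilon_k)_{k \geq 1}$, to be shrunk further below. Given a neighborhood $\cU$ of $f_0$ in $\Ham^\omega_{U_\infty}(\TT^2)$, arrange that $\sum_k \epsilon_k$ is smaller than the $d_{\tilde K}$-radius of some ball around $f_0$ contained in $\cU$.

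Iteration. Starting from $(U_0, f_0, \vphi_0, r_0, n_0=0)$, suppose inductively I have $(U_k, f_k, \vphi_k, n_k)$ with $f_k \in \Ham^\omega_{U_k}(\TT^2) \cap \ND_{n_k}$ and a graph KAM curve $\vphi_k : A(r_k) \to U_k$ of rotation number $\theta$ with $\tilde K \subset (U_k)_{\vphi_k, r_k}$. Apply Lemma \ref{lemma:omega perturb} with compact set $\tilde K$, radii $r_k > r_{k+1}$, threshold $n_k + 1$, and error $\epsilon_{k+1}$, obtaining $(U_{k+1}, f_{k+1}, \vphi_{k+1}, n_{k+1})$ with $n_{k+1} > n_k$, $\tilde K \subset (U_{k+1})_{\vphi_{k+1}, r_{k+1}}$, $d_{\tilde K}(f_{k+1}, f_k) < \epsilon_{k+1}$, $f_{k+1} \in \ND_{n_{k+1}}$, and $\min\{\#\Per_h(f_{k+1}, n_{k+1}),\, \#\Per_e(f_{k+1}, n_{k+1})\} \geq n_{k+1}\gamma_{n_{k+1}}$. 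Every $f_k$ extends holomorphically to $\Int \tilde K \supset U_\infty$ and the sequence is $d_{\tilde K}$-Cauchy, so it converges uniformly on $\tilde K$ to a holomorphic limit $f_\infty$ on $\Int \tilde K$. The limit is area-preserving and Hamiltonian (the flux homomorphism on the identity component of $\Diff^\omega_\Omega(\TT^2)$ is continuous), hence $f_\infty \in \Ham^\omega_{U_\infty}(\TT^2) \cap \cU$.

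It remains to verify $f_\infty \in \ND_\infty$ and the growth condition (\ref{eqn:analytic fast}). For each fixed $j$, the condition ``$f \in \ND_{n_j}$ and $\min\{\#\Per_h(f, n_j),\, \#\Per_e(f, n_j)\} \geq n_j \gamma_{n_j}$'' is $C^1$-open on a neighborhood of $\cl{U_\infty}$, by persistence of non-degenerate periodic points together with lower semi-continuity of both counts, and it holds for $f_j$. By Cauchy estimates, $d_{\tilde K}$-smallness controls the $C^1$-norm on the strictly smaller set $\cl{U_\infty}$ with a uniform constant. So I further shrink each $\epsilon_{k+1}$ with $k \geq j$ below the persistence threshold at stage $j$, ensuring the condition propagates through all later $f_k$ and to the limit $f_\infty$. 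Since $n_j \to \infty$, this gives both $f_\infty \in \ND_\infty$ and the unbounded $\limsup$ in (\ref{eqn:analytic fast}). The main obstacle is precisely this diagonal bookkeeping: at step $k+1$ the size $\epsilon_{k+1}$ must simultaneously meet finitely many persistence thresholds inherited from all earlier stages $j \leq k$, with the additional twist that the $C^1$-thresholds live on $\cl{U_\infty}$ while the perturbations are measured on $\tilde K$. Because the gap $\tilde K \setminus \cl{U_\infty}$ is fixed and Cauchy estimates give a uniform $C^0$-to-$C^1$ comparison across it, the finite constraints active at each step can always be satisfied and the diagonal construction closes.
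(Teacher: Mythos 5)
Your proposal is correct and follows essentially the same route as the paper: iterate Lemma \ref{lemma:omega perturb} to produce a sequence $(f_k)$ with fresh graph KAM curves of the same Diophantine rotation number, control the perturbation sizes against the (finitely many at each stage) persistence thresholds for non-degeneracy and for the periodic-orbit counts, and pass to the uniform limit on a fixed compact neighborhood of $\TT^2$; the paper packages the bookkeeping via $\epsilon_k<\epsilon_{k-1}/3$ together with the requirement that the whole $\epsilon_k$-ball about $f_k$ already satisfies the stage-$k$ conditions. The only small imprecision is your appeal to a ``gap $\tilde K\setminus\cl{U_\infty}$'' (empty when $U_\infty=\Int\tilde K$); what is actually needed, and true, is that uniform convergence on $\tilde K$ gives $C^1$ convergence on a neighborhood of $\TT^2$, which is compactly contained in $U_\infty$.
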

\begin{proof}
Take a compact subset $K_\infty$ of $(U_0)_{\vphi_0,r_0}$ such that
 $K \subset \Int K_\infty$.
Put $U_\infty=\Int K_\infty$
 and fix a neighborhood $\cU$ of $f_0$ in $\Ham^\omega_{U_\infty}(\TT^2)$.
Take $\epsilon_0>0$ and
 a compact set $K'_\infty \subset U_\infty$ such that the set
\begin{equation*}
 \{f \in \Ham^\omega_{U_\infty}(\TT^2) \mid
 d_{K'_\infty}(f,f_0)<\epsilon_0\}
\end{equation*}
 is contaied in $\cU$.
Put $n_0=1$.
We claim that there exist a sequence
 $(U_k,f_k,r_k,\vphi_k,n_k,\epsilon_k)_{k \geq 1}$ 
 of sextuplets such that
\begin{enumerate}
 \item $U_k$ is an open neighborhood of $\TT^2$ in $\CC^2/\ZZ^2$,
 $f_k \in \Ham^\omega_{U_k}(\TT^2)$,
 \item $r_k$ is a positive number and
 $\vphi_k:A(r_k) \ra U_k$ is a graph KAM curve of $f_k$
 with a Diophantine rotation number such that
 $K_\infty \subset (U_k)_{\vphi_k,r_k}$,
 \item $n_k > n_{k-1}$ and $0<\epsilon_k<\epsilon_{k-1}/3$,
 \item $d_{K_\infty}(f_k,f_{k-1})\leq \epsilon_{k-1}/3$, and
 \item any $f \in \Ham^\omega_{U_k}(\TT^2)$
 with $d_{K_\infty}(f,f_k)<\epsilon_k$ is
 contained in $\ND_{n_k}$ and satisfies that
\begin{equation*}
 \min\{\#\Per_h(f,n_k),\#\Per_e(f,n_k)\} \geq n_k \cdot \gamma_{n_k}.
\end{equation*}
\end{enumerate}
 for any $k\geq 1$.
By assumption,
 the quadruplet $(U_0,f_0,r_0,\vphi_0)$ satisfies the first and second items
 in the above claim for $k=0$.
We show that if $(U_{k-1},f_{k-1},r_{k-1},\vphi_{k-1})$ satisfies
 the first and second items in the above claim for some $k \geq 1$.
 then there exists $(U_k,f_k,r_k,\vphi_k,n_k,\epsilon_k)$
 which satisfies all the items.
Then, the claim will be proved by induction.

Take $r_k \in (0,r_{k-1})$ such that
 $K \subset (U_{k-1})_{\vphi_{k-1},r_k}$.
By Lemma \ref{lemma:omega perturb},
 there exist an open neighborhood $U_k$ of $\TT^2$ in $\CC^2/\ZZ^2$,
 $f_k \in \Ham^\omega_{U_k}(\TT^2) \cap \ND_{n_k}$,
 a graph KAM curve  $\vphi_k:A(r_k) \ra U_k$  of $f_k$
 with a Diophantine rotation number such that
 $K_\infty \subset (U_k)_{\vphi_k,r_k}$,
 $d_{K_\infty}(f_k, f_{k-1})<\epsilon_{k-1}/3$,
 and
\begin{equation*} 
 \min\{\#\Per_h(f_k,n_k),\#\Per_e(f_k,n_k)\}
 \geq n_k \cdot \gamma_{n_k}.
\end{equation*}
Since $\ND_{n_k}$ is an open subset of $\Ham^\omega(\TT^2)$,
 the set $\Ham^\omega_{\Int K_\infty}(\TT^2) \cap \ND_{n_k}$
 is an open subset of $\Ham^\omega_{\Int K_\infty}(\TT^2)$.
In $\ND_{n_k}$, the numbers of hyperbolic ({\it resp.} elliptic)
 periodic points of period $n_k$ is locally constant.
Hence, we can take $\epsilon_k \in (0,\epsilon_{k-1}/3)$ such that
 any $f \in \Ham^\omega_{U_k}(\TT^2)$ with $d_{K_\infty}(f,f_k)<\epsilon_k$
 is contained in $\ND_{n_k}$ and satisfies that
\begin{equation*} 
 \min\{\#\Per_h(f,n_k),\#\Per_e(f,n_k)\}
 \geq n_k \cdot \gamma_{n_k}.
\end{equation*}
Therefore, $(U_k,f_k,r_k,\vphi_k,n_k,\epsilon_k)$ satisfies
 all the items in the claim for $k$.

Since $d_{K_\infty}(f_k,f_{k-1})<\epsilon_{k-1}/3$
 and $0<\epsilon_k < \epsilon_{k-1}/3$ for any $k \geq 1$,
 we have $\epsilon_k <3^{-k} \epsilon_0$
 and $d_{K_\infty}(f_l,f_k)<2\epsilon_k/3$ for any $l \geq k \geq 0$.
This implies that the limit $f_\infty=\lim_{k \ra \infty} f_k$ exists
 and it is a continuous map on $K_\infty$ such that
 $d_{K_\infty}(f_\infty,f_k)<\epsilon_k$ for any $k \geq 0$.
Since $d_{K_\infty}(f_\infty,f_0)<\epsilon_0$,
 the map $f_\infty$ is an element of $\cU$.
By the choice of $\epsilon_k$, the map $f_\infty$ is contained in
 $\ND_\infty$ and
\begin{equation*} 
 \min\{\#\Per_h(f_\infty,n_k),\#\Per_e(f_\infty,n_k)\}
 \geq n_k \cdot \gamma_{n_k}
\end{equation*}
 for any $k \geq 1$.
\end{proof}

Now, we prove Theorem \ref{thm:main analytic}.
Let $\cU_*$ be the subset of $\Ham^\omega(\TT^2)$
 consisting of diffeomorphisms admitting a graph KAM curve
 with a Diophantine rotation number.
It is easy to see that the map $\bar{f}(x,y)=(x+\sin(2\pi y),y)$ is 
 contained in $\cU_*$, and hence, $\cU_*$  is non-empty.
By Lemma \ref{lemma:omega KAM},
 for any open neighborhood $U$ of $\TT^2$
 and any $f \in \Ham^\omega_U(\TT^2)$,
 there exists a neighborhood $\cU_f$ of $f$ in $\Ham^\omega_U(\TT^2)$
 such that any $\hat{f} \in \cU_f$ admits a graph KAM curve.
This implies that $\cU_* \cap \Ham^\omega_U(\TT^2)$ is an open subset
 of $\Ham^\omega_U(\TT^2)$ for any $U$.
Hence, $\cU_*$ is an open subset of $\Ham^\omega(\TT^2)$.

Fix a sequence $(\gamma_n)_{n \geq 1}$ of positive integers,
 $f \in \cU_*$, and an open neighborhood $\cU  \subset \cU_*$ of $f$.
In order to prove Theorem \ref{thm:main analytic},
 it is sufficient to show that there exists $f_\infty \in \cU \cap \ND_\infty$
 such that
\begin{equation}
\label{eqn:omega growth}
 \limsup_{n \ra \infty}
 \frac{1}{\gamma_n} \min\{\#\Per_h(f_\infty,n), \#\Per_e(f_\infty,n)\}
 =\infty.
\end{equation}
Let $U$ be an open neighborhood of $\TT^2$ such that
 $f$ is an element of $\Ham^\omega_U(\TT^2)$.
The map $f$ admits a graph KAM curve $\vphi:A(r) \ra U$
 with a Diophantine rotation number.
By Proposition \ref{prop:omega perturb},
 there exist an open subset $U_\infty$ of $U$
 and $f_\infty \in \Ham^\omega_{U_\infty}(\TT^2) \cap \cU \cap \ND_\infty$
 which satisfies the condition (\ref{eqn:omega growth}).

\appendix
{
\section{Another example
 of smooth unimodal map with fast growth}
In this appendix, we give a simple example of
 a $C^\infty$ unimodal map on a compact interval
 which exhibits fast growth of the number of periodic points.
Let $\Per(f,n)$ be the set of periodic point of a map $f$
 whose minimal period is $n$.
\begin{thm}
Let $(\gamma_n)_{n \geq 1}$ be a sequence of positive integers.
Then, there exists a $C^\infty$ unimodal self-map $f$ on $[-1,1]$ such that
 all periodic points of $f$ are hyperbolic and
\begin{equation}
\label{eqn:one-dim}
 \#\Per(f,n) \geq \gamma_n
\end{equation}
 for any $n \geq 2$.
\end{thm}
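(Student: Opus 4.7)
The plan is to build $f$ as a $C^\infty$-limit of a sequence $(f_N)_{N\geq 1}$ of $C^\infty$ unimodal maps on $[-1,1]$, where each $f_N$ has only hyperbolic periodic points and at step $N \geq 2$ one creates at least $\gamma_N$ new hyperbolic periodic points of minimal period exactly $N$. For the base map $f_1$, I would take any $C^\infty$ Kupka--Smale unimodal map on $[-1,1]$ whose unique critical point $c$ is infinitely flat, meaning that every derivative of $f_1$ vanishes at $c$; such a map exists by an obvious direct construction.

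Inductively, given $f_{N-1}$, I would pick a small interval $J_N \subset [-1,1]$ with the following properties: (i) the iterates $J_N, f_{N-1}(J_N),\dots,f_{N-1}^{N-1}(J_N)$ are pairwise disjoint; (ii) $J_N$ is disjoint from the critical point of $f_{N-1}$ and from every periodic orbit of $f_{N-1}$ of period at most $N$; and (iii) $f_{N-1}^{\,N}$ restricted to $J_N$ is $C^N$-close to a constant. Point (iii) is achievable because the flatness of $f_1$ at $c$ propagates, under iteration, to preimages of $c$: if $q$ satisfies $f_1^{\,k}(q)=c$ for some $k<N$, then every derivative of $f_1^{\,N}$ vanishes at $q$, so on a small neighbourhood of $q$ the map $f_1^{\,N}$ is arbitrarily flat, and the same remains true for $f_{N-1}^{\,N}$ provided the earlier perturbations were chosen sufficiently small. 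Once $J_N$ is fixed, I would add a localized smooth bump $h_N$ supported in $J_N$ so that $f_N=f_{N-1}+h_N$ is unimodal and the graph of $f_N^{\,N}|_{J_N}$ crosses the diagonal transversally at least $\gamma_N$ times in $J_N$. By (i), the corresponding fixed points of $f_N^{\,N}$ have minimal period exactly $N$; by transversality, they are hyperbolic; and by (ii) together with smallness of $h_N$, every earlier hyperbolic periodic orbit is preserved.

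The key technical leverage is (iii): since $f_{N-1}^{\,N}|_{J_N}$ is essentially constant, a perturbation $h_N$ of arbitrarily small $C^N$-norm can still push $f_N^{\,N}(x)-x$ to oscillate as many times as we wish across zero. Consequently I can arrange $\|h_N\|_{C^N}\leq 2^{-N}$, so that $f=f_1+\sum_{N\geq 2} h_N$ converges in every $C^k$-norm and is a $C^\infty$ unimodal map on $[-1,1]$. Persistence of transverse zeros under $C^1$-small perturbations then guarantees that every periodic orbit created at step $N$ survives in $f$ with the same minimal period, giving $\#\Per(f,N)\geq \gamma_N$ for every $N\geq 2$; by the same persistence, all other periodic orbits of $f$ remain hyperbolic. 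The main difficulty lies in executing (iii) correctly, in particular in controlling how the flatness of $f_1$ at preimages of $c$ interacts with the accumulated earlier perturbations, but once the perturbation sizes are chosen in a rapidly decreasing geometric sequence the estimates close routinely.
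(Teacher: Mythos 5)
Your inductive scheme has the right overall shape (localized bumps with supports whose first $N$ iterates are pairwise disjoint, geometric decay of perturbation sizes, persistence of transverse crossings in the limit), but the engine of the construction --- step (iii) and the claim that a $C^N$-small bump then creates $\gamma_N$ diagonal crossings --- does not work. First, ``$f_{N-1}^{\,N}|_{J_N}$ is close to a constant $v$'' is the wrong target: unless $v$ lies in $J_N$, the graph of $f_N^{\,N}|_{J_N}$ is a nearly horizontal segment at height $v$ far from the diagonal, and no small perturbation produces even one fixed point; you never arrange $f_{N-1}^{\,N}(q)\approx q$, and for a general Kupka--Smale base map there is no reason a preimage of $c$ should return near itself at time exactly $N$. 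Second, and more fatally, the flatness you invoke works \emph{against} you. Writing $f_N^{\,N}(x)=f_{N-1}^{\,N-1}\bigl(f_{N-1}(x)+h_N(x)\bigr)$ on $J_N$, the outer map $f_{N-1}^{\,N-1}$ is itself infinitely flat at $f_{N-1}(q)$ (its orbit passes through the infinitely flat critical point $c$ at step $k$), so its derivative is essentially zero on the whole range of $f_{N-1}+h_N$. Hence the bump's contribution to $f_N^{\,N}$ is damped to nothing: $(f_N^{\,N})'\approx 0$ on $J_N$, $f_N^{\,N}(x)-x$ has derivative $\approx -1$, and it has at most one zero no matter how $h_N$ oscillates. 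The very mechanism that makes the return map ``flat'' annihilates the effect of any perturbation placed upstream of $c$.

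What is actually needed is an interval on which the $N$-th return map is $C^1$-close to the \emph{identity} (graph running along the diagonal with slope $\approx 1$), with the orbit of that interval avoiding the critical point so that perturbations are not damped; then an arbitrarily small bump makes $f^{\,N}(x)-x$ change sign $\gamma_N$ times transversally. This is exactly what the paper builds by hand: the explicit Misiurewicz-type map $f_0$ satisfies $f_0^{k+1}=\mathrm{id}$ on each interval $[x_k,x_k']$, whose forward orbit stays in the affine expanding branches away from $x=0$, and the bumps are placed there. If you want to keep your softer ``abstract base map'' approach, you must replace (iii) by a condition guaranteeing $f_{N-1}^{\,N}|_{J_N}$ is $C^1$-close to the identity and that the orbit of $J_N$ up to time $N$ avoids $c$ --- at which point you are essentially forced into an explicit construction like the paper's. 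Separately, to conclude that \emph{all} periodic points of the limit are hyperbolic and that the limit is still unimodal, you need the safety-margin bookkeeping (openness of the sets $\ND'_k$ with radii $\epsilon_k$, and a positive lower bound $\eta(|x|)<|f'(x)|$ off the critical point preserved at every stage), which your sketch only gestures at.
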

The unique critical point of the above map must be flat
 as mentioned in Section \ref{sec:introduction}.
For any finite $r \geq 1$,
 Kaloshin and Kozlovski \cite{KK} constructed
 a $C^r$ unimodal map $f$ which is of class $C^\infty$
 except at the critical point
 and satisfies $\#\Per(f,3^k) \geq \gamma_{3^k}$ for any $k \geq 1$.
While their construction of the example starts
 from an infinitely many renormalizable map,
 our construction starts from a Misiurewicz map.
\begin{proof}
Our proof is inspired by
 de Melo's construction of a $C^\infty$ interval map
 with infinitely many periodic attractors \cite{dM}.

Recall that the $C^\infty$ topology of
 the space of $C^\infty$ self-maps on $[-1,1]$
 is given by the metric
\begin{equation*}
 d_{C^\infty}(f,g)=\sum_{s=0}^\infty2^{-s}
 \frac{\|f^{(s)}-g^{(s)}\|_{\sup}}{1+\|f^{(s)}-g^{(s)}\|_{\sup}},
\end{equation*}
 where $\|h\|_{\sup}=\sup_{x \in [-1,1]}|h(x)|$
 and $f^{(s)}$ is the $s$-th derivative of $f$.

Take a $C^\infty$ non-decreasing function $\chi$ on $\RR$ such that
 $\chi(t)=0$ if $t \leq 0$ and $\chi(t)=1$ if $t \geq 1$.
Fix $0<\delta<1/4$ and define a $C^\infty$ function $g_k$ by
\begin{equation*}
g_k(x)=
2^{-k}\left(1+\chi\left((2k-1)(2k\delta^{-1}x-1)\right)\right)
\cdot (1+x).
\end{equation*}
Put $x_k=\delta/(2k+1)$ and $x'_k=\delta/(2k)$.
Then, we have $g_k(x)=g_{k+1}(x)=2^{-k}(1+x)$
 for $x \in [x_k,x'_k]$
 and $\lim_{k \ra \infty} d_{C^\infty}(g_k,0)=0$.
There exists a $C^\infty$
 self-map $f$ on $[-1,1]$ such that
 $x=0$ is the unique critical point of $f_0$ and
\begin{equation*}
 f_0(x)=
\begin{cases}
 1+2x & (x \in \left[-1,-1/3\right]),\\
 1-2x & (x \in \left[1/3,1\right]),\\
 1-g_k(x) & (x \in [x_k,x_{k-1}],\, k \geq 1).
\end{cases}
\end{equation*}
Since $f_0(y-1)=2y-1$ for $y \in [0,2/3]$,
 we have
\begin{align*}
f_0^{k+1}(x)
 & = f_0^{k}(1-2^{-k}(1+x))
  = f_0^{k-1}(2^{-(k-1)}(1+x)-1) 
  = x
\end{align*}
 for any $k \geq 1$ and $x \in [x_k,x'_k]$.

Since $x=0$ is the unique critical point of $f_0$,
 we can take a continuous function $\eta$ on $[0,1]$ such that
 $0<\eta(|x|)<|f_0'(x)|$ for any $x \in [-1,1] \setminus \{0\}$.
Let $C^\infty_\eta([-1,1])$ be the set of $C^\infty$ self-maps $f$
 on $[-1,1]$ such that $0<\eta(|x|)<|f'(x)|$
 for any $x \in [-1,1] \setminus \{0\}$.
For $n \geq 1$, let $\ND'_n$ be the subset of $C^\infty_\eta([-1,1])$
 consisting of map
 whose all periodic points of period less than $n+1$ are hyperbolic.
Remark that $\ND'_n$ is open and dense in $C^\infty_\eta([-1,1])$
 with respect to the $C^\infty$ topology.

For $m \geq 1$, define a subset $I_m$ of $[-1,1]$ by
\begin{align*}
 I_m & = \bigcup_{k=m}^\infty \bigcup_{n=0}^{k} f_0^n([x_k,x'_k])
\end{align*}
Let $\bar{I}_m$ be the closure of $I_m$.
For $0 \leq j \leq k-2$, we have
\begin{equation*}
f_0^{k-j}([x_k,x_{k+1}])
 =[2^{-(j+1)}(1+x_k)-1,2^{-(j+1)}(1+x'_k)-1]
 \subset  [2^{-(j+1)}-1,2^{-j}-1].
\end{equation*}
This implies that
\begin{equation*}
 \bar{I}_m \setminus I_m=\{-1,0,1\} \cup \{2^{-n}-1 \mid n \geq 1\}.
\end{equation*}
Therefore, $x=-1$ is the unique periodic point of $f_0$
 in $\bar{I}_m$ whose period is less than $m+1$.
Fix $\epsilon_0>0$.
By a small perturbation on $[-1,1] \setminus \bar{I}_1$,
 we can take $\hat{f}_0 \in C^\infty_\eta([0,1]) \cap  \ND'_1$
 such that $d_{C^\infty}(\hat{f}_0,f_0)<\epsilon_0/3$ and
 $\supp(\hat{f}_0-f_0) \cap \bar{I}_1=\emptyset$.
By another small perturbation on $[x_1,x_1']$,
 we also obtain $f_1 \in C^\infty_\eta([0,1]) \cap \ND'_1$
 such that $d_{C^\infty}(f_1, f_0)<\epsilon_0/3$,
 $\supp(f_1-f_0) \cap \bar{I}_2=\emptyset$,
 and $\Per(f_1,2) \cap (x_1,x'_1)$
 consists of at least $\gamma_2$ hyperbolic periodic points of period $2$.
Take $\epsilon_1 \in (0,\epsilon_0/3)$ such that
 any $f \in C^\infty_\eta([-1,1])$ with $d_{C^\infty}(f,f_1)<\epsilon_1$
 is contained in $\ND'_1$ and satisfies that $\#\Per(f,2) \geq \gamma_2$.
 
By inductive perturbations in the same way, 
we can obtain sequences $(f_k)_{k \geq 1}$
 of maps in $C^\infty_\eta([-1,1])$
 and $(\epsilon_k)_{k \geq 1}$ of positive real numbers 
 which satisfy the following conditions for any $k \geq 1$:
\begin{enumerate}
 \item $\epsilon_k \in (0,\epsilon_{k-1}/3)$.
 \item $d_{C^\infty}(f_k,f_{k-1})<\epsilon_{k-1}/3$.
 \item $\supp(f_k-f_{k-1}) \cap \bar{I}_{k+1}=\emptyset$.
 \item Any $f \in C^\infty_\eta([-1,1])$
 with $d_{C^\infty}(f,f_k)<\epsilon_k$ is contained in $\ND'_k$
 and satisfies that $\#\Per(f,k+1) \geq \gamma_{k+1}$.
\end{enumerate}
Since $\epsilon_k \in (0,\epsilon_{k-1}/3)$
 and $d_{C^\infty}(f_k,f_{k-1})<\epsilon_{k-1}/3$,
 $f_k$ converges to a $C^\infty$ map $f$ 
 and it satisfies that $d_{C^\infty}(f,f_k)<\epsilon_k$ for any $k \geq 1$.
By the choice of $\epsilon_k$,
 all periodic points of $f$ are hyperbolic
 and $\# \Per(f,k+1) \geq \gamma_{k+1}$ for any $k \geq 1$.
Since $|f_k'(x)|> \eta(x)>0$ for any $x \in [-1,1] \setminus \{0\}$
 and $k \geq 1$,
 we have $|f'(x)| \geq \eta(x)>0$ for any $x \neq 0$.
Therefore, $x=0$ is the unique critical point of $f$.
\end{proof}

}

\end{document}